\providecommand{\bigjoin}{\mathop{%
\mathchoice{\raisebox{-2pt}{\huge$\Join$}}{\mbox{\LARGE$\Join$}}%
{\raisebox{0pt}{\Large$\Join$}}{\Join}}\displaylimits}%
\newcommand\restr[2]{{
  \left.\kern-\nulldelimiterspace 
  #1 
  \vphantom{\big|} 
  \right|_{#2} 
  }}
\DeclareMathOperator{\dom}{dom}
\DeclareMathOperator{\del}{del}
\DeclareMathOperator{\free}{free}
\newcommand{\dbar}{\mathord{\parallel}}
\title{Conjunctive Table Algebras}
\begin{document}

\author{Jens K\"{o}tters \and Stefan E. Schmidt}
\institute{Technische Universit\"{a}t Dresden, Dresden, Germany}
\maketitle

\begin{abstract}
Conjunctive table algebras are introduced and axiomatically characterized.
A conjunctive table algebra is a variant of SPJR algebra (a weaker form of relational algebra),
which corresponds to conjunctive queries with equality. The table operations relate to
logical operations (e.g. column deletion corresponds to existential quantification).
This enables a connection between database theory and algebraic logic, particularly cylindric algebras.
A comparison shows which cylindric algebra axioms hold in conjunctive table algebras,
which ones are modified, and which ones hold in addition.
\keywords{Cylindric Algebra \and SPJR Algebra \and Algebraic Logic \and Conjunctive Queries}
\end{abstract}

\section{Introduction}
\label{intro0}
For the following discussion, we assume it is known what a first-order formula $\varphi$ is, and what it means for $\varphi$ to hold
in a structure $\mathfrak{A}$ under a variable assignment $\alpha$ (written as $\mathfrak{A} \models \varphi[\alpha]$).
A countably infinite set $\mathbf{var}$ contains the variables, and $x_1,x_2,x_3,\dots$ is a fixed enumeration of $\mathbf{var}$
(whereas $x$, $y$, $z$, $y_i$ or $z_i$ denote arbitrary variables). We only consider the relational setting,
where all atoms in $\varphi$ have the form $Rz_1\dots z_n$ or $x=y$, and $\mathfrak{A}$ is a \emph{relational structure},
i.e. $\mathfrak{A}=(A,(R^\mathfrak{A})_R)$, assigning a relation $R^{\mathfrak{A}}$ to each symbol $R$.

In this setting, two different concepts of solution set can be compared, which are founded in algebraic logic and database theory, respectively.
In each case, we consider $\mathfrak{A}$ to be fixed. First, we reproduce the relevant definitions for algebraic logic~\cite{Monk93}.
The "solution set" $\varphi^{\mathfrak{A}} := \{\alpha \in A^{\mathbf{var}} \mid \mathfrak{A} \models \varphi[\alpha]\}$
contains all variable assignments $\alpha:\mathbf{var} \rightarrow A$ which make $\varphi$ hold. The "solution sets"
are collected in the set $\mathit{Cs}\mathfrak{A}$, which is thus a subset of the power set $\mathfrak{P}(A^{\mathbf{var}})$.
Logical operations translate to set operations on $\mathit{Cs}\mathfrak{A}$. We have
$(\varphi \wedge \psi)^{\mathfrak{A}} = \varphi^{\mathfrak{A}} \cap \psi^{\mathfrak{A}}$,
$(\varphi \vee \psi)^{\mathfrak{A}} = \varphi^{\mathfrak{A}} \cup \psi^{\mathfrak{A}}$
and $(\neg \varphi)^{\mathfrak{A}} = -\varphi^{\mathfrak{A}}$ (complement in $A^{\mathbf{var}}$).
Special formulas $\mathbf{T}$ (tautology) and $\mathbf{F}$ (contradiction) satisfy $\mathbf{T}^{\mathfrak{A}}=A^{\mathbf{var}}$
and $\mathbf{F}^{\mathfrak{A}} = \emptyset$. Existential quantification over $x$ is described by a \emph{cylindrification operation} $C_x$,
i.e. $(\exists x \varphi)^{\mathfrak{A}} = C_x(\varphi^{\mathfrak{A}})$, and each equality atom $x\mathop{=}y$ is represented
by a \emph{diagonal} $D_{xy}=(x\mathop{=}y)^{\mathfrak{A}}$.
The algebra $\mathfrak{Cs}\mathfrak{A}:=(\mathit{Cs}\mathfrak{A},\cup,\cap,-,\emptyset,A^{\mathbf{var}},C_x,D_{xy})_{x,y \in \mathbf{var}}$
is a particular kind of cylindric set algebra, which is in turn a particular kind of cylindric algebra. Cylindric algebras are
defined by axioms (see e.g.~\cite{HMT71}).

The database-theoretic analog of $\varphi^{\mathfrak{A}}$ is
$\mathrm{res}_{\mathfrak{A}}(\varphi) := \{t \in A^{\mathrm{free}(\varphi)} \mid \mathfrak{A} \models \varphi[t]\}$,
where $\free(\varphi)$ is the set of free variables in $\varphi$, and $\mathfrak{A} \models \varphi[t]$ has the obvious meaning.
Here, we understand $\varphi$ as a relational calculus query, $\mathfrak{A}$ as a database, and $\mathrm{res}_{\mathfrak{A}}(\varphi)$
as a result table in the named perspective (cf.~\cite[Sect.\,3.2]{AHV95}), with entries in $A$ and the free variables as column names.
The set of column names is the \emph{table schema}, so a table with schema $X$ is an element of $\mathfrak{P}(A^X)$.
The set of tables with entries in $A$ is $\mathrm{Tab}(A) := \bigcup_{X \in \mathfrak{P}_{\mathrm{fin}}(\mathbf{var})} \mathfrak{P}(A^X)$,
where $\mathfrak{P}_{\mathrm{fin}}(\mathbf{var})$ contains the finite subsets of $\mathbf{var}$. All non-empty tables in $\mathrm{Tab}(A)$
thus have a finite schema, but there is only a single empty table; we assign it the infinite schema $\mathbf{var}$.
Naturally, $\mathrm{Tab}(A)$ is extended with the operations of Codd's relational algebra. One of these operations is the \emph{natural join},
given by $T_1 \Join T_2 := \{t \in A^{\mathrm{X_1} \cup \mathrm{X_2}} \mid t|_{X_1} \in T_1 \text{ and } t|_{X_2} \in T_2\}$ for nonempty
$T_1 \subseteq A^{X_1}$ and $T_2 \subseteq A^{X_2}$, as well as $\emptyset \Join T_2 = T_1 \Join \emptyset = \emptyset$. Further operations
are discussed in Sect.~\ref{ctas0}.

Imieli\'{n}ski and Lipski~\cite{IL84} have described a bridge between database theory and algebraic logic,
based on a function $h$, which identifies tables with sets in a cylindric set algebra.
We state it as $h:\mathrm{Tab}(A) \rightarrow \mathfrak{P}(A^{\mathbf{var}})$, defined by
$h(T):=\{\alpha \in A^{\mathbf{var}} \mid \alpha|_{\mathrm{schema}(T)} \in T\}$.
Notably, the natural join is represented by set intersection, i.e. $h(S \Join T) = h(S) \cap h(T)$.
Not every relational algebra operation matches a cylindric algebra operation,
but can be replicated on $\mathfrak{P}(A^{\mathbf{var}})$, and is preserved in this sense. Also, the "solution sets" are preserved,
i.e. $h(\mathrm{res}_{\mathfrak{A}}(\varphi)) = \varphi^{\mathfrak{A}}$.
Imieli\'{n}ski and Lipski demonstrate the practicality of this connection by applying results, that are available for cylindric algebras,
to relational algebra. However, one aspect deserves closer attention: For all tables $A^{\{x\}}$, $x \in \mathbf{var}$,
we have $h(A^{\{x\}}) = A^{\mathbf{var}}$, moreover $h(T) = h(T \Join A^{\{x\}})$ for all $T$ with $x \not\in \mathrm{schema}(T)$,
which shows that $h$ is not injective. What happens here is, that $h$ does not preserve the schema!
Using the set $\mathrm{NTup}(A) := \bigcup_{X \in \mathfrak{P}_{\mathrm{fin}}(\mathrm{var})} A^X$ of \emph{named tuples} over $A$,
we obtain a similar, but accurate set representation of $\mathrm{Tab}(A)$: the function $h^{*}:\mathrm{Tab}(A) \rightarrow \mathfrak{P}(\mathrm{NTup}(A))$,
with $h^{*}(T):=\{t \in \mathrm{NTup}(A) \mid t \text{ extends some } s \in T\}$, which contains all extensions of rows in $T$ (including the rows
themselves). However, while $h^{*}$ is injective, $\mathfrak{P}(\mathrm{NTup}(A))$ does not underlie a cylindric set algebra.

Imieli\'{n}ski and Lipski describe conditions~\cite[Thm.\,2]{IL84} under which $h$ acts as an embedding.
More precisely, $h$ is restricted to finite tables, and $A$ is required to be infinite. These restrictions are justified
under the conventional interpretation of $\mathfrak{A}$ as a database, where the relations $R^{\mathfrak{A}}$ represent
the tables in the database, and $A$ contains the possible table entries, like integers, strings or dates.
However, we want to support a different interpretation, more abstract technically, but more concrete conceptually,
where $\mathfrak{A}$ is a finite set, containing the actual objects described in a database, like customers, employees and orders.
In this case, $\mathfrak{A}$ could be obtained from a relational database by conceptual scaling~\cite{KE18}, a method used in
Formal Concept Analysis~\cite{GaW1999}. In this case, the relations $R^{\mathfrak{A}}$ describe either foreign key relations,
or virtual tables (i.e. views) which represent possible conditions in a \texttt{WHERE}-clause, like \texttt{Age >= 30} or
\texttt{Nationality='Spanish'} or \texttt{LastName LIKE 'S\%'}; the available relations depend on the scaling.
In this context, $\mathrm{Tab}(A)$ represents the space of result tables, whereas no assumption on the underlying data model are made.
Ultimately, as in formal logic, $\mathfrak{A}$ can be any relational structure, with infinite relations.

In this paper, we describe an alternative bridge between database theory and algebraic logic, which is not based on the function $h$.
While $h$ preserves relational algebra operations on $\mathfrak{P}(A^{\mathbf{var}})$, we take kind of an opposite approach,
and define analogs of cylindric algebra operations on $\mathrm{Tab}(A)$. However, we do not cover the supremum and complement
operations of cylindric algebras; so the resulting table algebra does not have the full expressivity of relational algebra;
it corresponds to SPJR algebra~\cite{AHV95}. On the other hand, we obtain an exact characterization
by axioms. The axioms take the schema into account, which was neglected by $h$, and are based on cylindric algebra axioms
as much as possible. All results of cylindric algebra, which are derived from the shared axioms, obviously hold in the table algebra;
also, the similarity between the two axiomatizations allows to use the theory of cylindric algebras as a blueprint for further development.

\section{Conjunctive Table Algebras}
\label{ctas0}
A \emph{table} over $G$ is a set $T \subseteq G^X$,
and $\mathrm{Tab}(G) = \bigcup \{\mathfrak{P}(G^X) \mid X \in \mathfrak{P}_{\mathrm{fin}(\mathbf{var})}\}$ is the set of tables over $G$.
The \emph{schema} of $T \in \mathfrak{P}(G^X)\setminus\{\emptyset\}$ is $X$, and each $t \in T$ is a \emph{row} of $T$, with \emph{entry} $t(x)$ in the
column with \emph{header} $x \in X$. The element $\emptyset \in \mathrm{Tab}(G)$ is the \emph{empty table}, and we assign it the schema $\mathbf{var}$.
The function $\mathrm{schema}:\mathrm{Tab}(G) \rightarrow \mathfrak{P}(\mathbf{var})$ maps each table to its schema.
We call $\mathrm{Tab}(G)[X] := G^X$ the \emph{$X$-slice} of $\mathrm{Tab}(G)$, whereas $\mathrm{Tab}^{*}(G)[X] := \mathrm{Tab}(G)[X]\setminus\{\emptyset\}$
contains precisely the tables with schema $X$.
The natural join of $T_1 \in \mathrm{Tab}(G)[X_1]$ and $T_2 \in \mathrm{Tab}(G)[X_2]$ (see Sect.~\ref{intro0})
is a table $T_1 \Join T_2 \in \mathrm{Tab}(G)[X_1 \cup X_2]$.

The natural join is associative, commutative, and idempotent (the latter means $T \Join T = T$), so by definition,
$(\mathrm{Tab}(G),\Join)$ is a semilattice. This means we have an implicit partial order on $\mathrm{Tab}(G)$ with the natural join as its infimum;
it is given by $T_1 \leq T_2 :\Leftrightarrow T_1 = T_1 \Join T_2$. The empty table $\emptyset$ is the least element in the table order, and the
table $\{\langle\rangle\} \in \mathrm{Tab}^{*}(G)[\emptyset]$, which contains only the \emph{empty tuple} $\langle\rangle \in G^{\emptyset}$,
is the greatest element. The following proposition characterizes the table order
(and explains why, from an order-perspective, the schema $\mathbf{var}$ suits the empty table).
\begin{proposition}
\label{torder0}
Let $T_1 \in \mathrm{Tab}(G)[X_1]$ and $T_2 \in \mathrm{Tab}(G)[X_2]$.
Then $T_1 \leq T_2 \Leftrightarrow (X_1 \supseteq X_2$
and $\{t|_{X_2} \mid t \in T_1\} \subseteq T_2)$.
\end{proposition}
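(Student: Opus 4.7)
The plan is to split on whether the tables are empty, since the convention $\mathrm{schema}(\emptyset) = \mathbf{var}$ makes the edge cases behave differently from the generic case. I would first verify the trivial situations, then reduce everything else to direct unpacking of the join definition.

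For the edge cases: if $T_1 = \emptyset$, both sides of the equivalence hold vacuously (the left side because $\emptyset \Join T_2 = \emptyset$, the right side because $X_1 = \mathbf{var} \supseteq X_2$ and the projection set is empty). If $T_2 = \emptyset$ but $T_1 \neq \emptyset$, then $X_2 = \mathbf{var}$ while $X_1$ is finite, so $X_1 \not\supseteq X_2$, and on the other hand $T_1 \Join \emptyset = \emptyset \neq T_1$; both sides fail.

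The main work is the case where $T_1, T_2$ are both non-empty. For $(\Rightarrow)$, if $T_1 = T_1 \Join T_2$ then the join is non-empty and sits inside $G^{X_1 \cup X_2}$, so its schema equals $X_1 \cup X_2$; equating this with $\mathrm{schema}(T_1) = X_1$ forces $X_2 \subseteq X_1$. Given any $t \in T_1$, the equality $T_1 = T_1 \Join T_2$ places $t$ in the join, so by definition $t|_{X_2} \in T_2$, yielding the projection inclusion. For $(\Leftarrow)$, assume $X_1 \supseteq X_2$ so that $X_1 \cup X_2 = X_1$; then $T_1 \Join T_2 \subseteq G^{X_1}$, and one shows both inclusions: any $t \in T_1$ satisfies $t|_{X_1} = t \in T_1$ and $t|_{X_2} \in T_2$ by the hypothesis, hence $t \in T_1 \Join T_2$; conversely any $t \in T_1 \Join T_2$ satisfies $t = t|_{X_1} \in T_1$.

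I expect the only real pitfall to be the bookkeeping around the empty table's schema convention: a careless proof that manipulates $\mathrm{schema}(T_1 \Join T_2) = X_1 \cup X_2$ without checking non-emptiness would give the wrong schema in exactly the case $T_2 = \emptyset$, and the equivalence would then appear to break. Everything else is essentially substituting the definition of $\Join$ on non-empty inputs and reading off the two conditions.
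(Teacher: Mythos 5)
Your proof is correct and follows the same route as the paper: the identical three-way case split on emptiness (using the convention $\mathrm{schema}(\emptyset)=\mathbf{var}$), followed by unpacking the definition of $\Join$ for non-empty tables. You merely spell out the details that the paper dismisses as ``straightforward.''
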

\begin{proof}
If $T_1=\emptyset$, both sides of the equivalence hold. If $T_1 \neq \emptyset$ and $T_2 = \emptyset$, neither side holds.
For $T_1,T_2 \neq \emptyset$, the equivalence is straightforward to obtain from the definition of the natural join.
\qed
\end{proof}
Even with this characterization, the table order may seem artificial. The nature of the table order becomes clear, if each table $T$ is identified
with $h^{*}(T)=\{t \in \mathrm{NTup}(G) | s \leq t \text{ extends some } s \in T\}$, cf. Sect.~\ref{intro0}, as Prop.~\ref{tblord0} shows.
\begin{proposition}
\label{tblord0}
The function $h^{*}:\mathrm{Tab}(G) \rightarrow \mathfrak{P}(\mathrm{NTup}(G))$ defines an order embedding of $(\mathrm{Tab}(G),\leq)$
into $(\mathfrak{P}(\mathrm{NTup}(G)),\subseteq)$, i.e.
$T_1 \leq T_2 \Leftrightarrow \mathrm{tups}(T_1) \subseteq \mathrm{tups}(T_2)$ for all $T_1,T_2 \in \mathrm{Tab}(G)$.
\end{proposition}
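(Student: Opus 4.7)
The plan is to reduce the claim to Proposition~\ref{torder0}, showing that the pair of conditions \mbox{$X_1 \supseteq X_2$} and $\{t|_{X_2} \mid t \in T_1\} \subseteq T_2$ is jointly equivalent to $h^*(T_1) \subseteq h^*(T_2)$. After dispatching the degenerate cases, the forward and backward directions are both short unfoldings of what "extends" means.

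First I would handle the empty tables. If $T_1 = \emptyset$, then no named tuple extends a row of $T_1$, so $h^*(T_1) = \emptyset$; and $T_1 \leq T_2$ holds because $\emptyset$ is the least element. If $T_1 \neq \emptyset$ but $T_2 = \emptyset$, then every row of $T_1$ extends itself, so $h^*(T_1) \neq \emptyset = h^*(T_2)$; and $T_1 \not\leq T_2$ by Proposition~\ref{torder0}. In both edge cases the two sides of the equivalence agree.

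For the main case, I fix $T_1 \in \mathrm{Tab}^{*}(G)[X_1]$ and $T_2 \in \mathrm{Tab}^{*}(G)[X_2]$. For the forward direction, assuming $T_1 \leq T_2$, Proposition~\ref{torder0} gives $X_1 \supseteq X_2$ and $\{t|_{X_2} \mid t \in T_1\} \subseteq T_2$. Given $u \in h^*(T_1)$, choose $s \in T_1$ with $u|_{X_1} = s$; then $u|_{X_2} = s|_{X_2} \in T_2$, so $u$ extends a row of $T_2$ and thus $u \in h^*(T_2)$. For the backward direction, I use the fact that every $t \in T_1$ extends itself, hence $t \in h^*(T_1) \subseteq h^*(T_2)$. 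By definition of $h^*$, $t$ extends some $s \in T_2$, which forces $X_1 = \mathrm{dom}(t) \supseteq \mathrm{dom}(s) = X_2$ and $t|_{X_2} = s \in T_2$. Both conditions of Proposition~\ref{torder0} now hold, so $T_1 \leq T_2$.

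There is no real obstacle here once the definitions are in hand; the only thing to be careful about is the convention that the empty table carries schema $\mathbf{var}$, which is why the edge-case analysis has to be done first rather than folded into the general argument. The only substantive content is the observation that each row of $T_1$ is its own witness in $h^*(T_1)$, which forces the schema inclusion in the backward direction.
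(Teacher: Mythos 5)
Your proof is correct and follows essentially the same route as the paper's: both directions reduce to the characterization in Proposition~\ref{torder0}, with the key observation in the backward direction that each row of $T_1$ is its own witness in $h^*(T_1)$. Your explicit treatment of the empty-table edge cases is slightly more careful than the paper's (which leaves them implicit), but the substance is identical.
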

\begin{proof}
Let $T_1 \in \mathrm{Tab}(G)[X_1]$ and $T_2 \in \mathrm{Tab}(G)[X_2]$. "$\Rightarrow$": Assume $T_1 \leq T_2$. If $t \in \mathrm{tups}(T_1)$,
then $t|_{X_1} \in T_1$, and thus $t|_{X_2} = (t|_{X_1})|_{X_2} \in T_2$ by assumption, which means $t \in \mathrm{tups}(T_2)$.
This shows $\mathrm{tups}(T_1) \subseteq \mathrm{tups}(T_2)$.
"$\Leftarrow$": Assume $\mathrm{tups}(T_1) \subseteq \mathrm{tups}(T_2)$. If $t \in T_1$, then also
$t \in \mathrm{tups}(T_1) \subseteq \mathrm{tups}(T_2)$, so $X_2 \subseteq X_1$ and $t|_{X_2} \in T_2$.
This shows $T_1 \leq T_2$.
\qed
\end{proof}
In the spirit of cylindric set algebras, we translate logical operations to table operations (cf. Sect~\ref{intro0}).
We restrict ourselves to conjunctive calculus queries with equality; they are represented by \emph{primitive positive formulas}
(i.e. formulas built from atoms using $\wedge$ and $\exists$). Each equality atom $x\mathop{=}y$ is represented by the
\emph{equality table} $E_{xy}:=\{t \in G^{\{x,y\}} \mid t(x)=t(y)\}$, because $\mathrm{res}_{\mathfrak{A}}(x\mathop{=}y) = E_{xy}$.
For $x=y$, this table has only a single column. Conjunction is described by the natural join, because
$\mathrm{res}_{\mathfrak{A}}(\varphi \wedge \psi) = \mathrm{res}_{\mathfrak{A}}(\varphi) \Join \mathrm{res}_{\mathfrak{A}}(\psi)$.
Existential quantification over $x$ is described by a \emph{delete operation},
defined by $\mathrm{del}_x(T) := \{t|_{X\setminus\{x\}} \mid t \in T\}$ for $T \in \mathrm{Tab}(G)[X]$,
because $\mathrm{res}_{\mathfrak{A}}(\exists x \varphi) = \mathrm{del}_x(\mathrm{res}_{\mathfrak{A}}(\varphi))$.
It deletes the $x$-column of $T$, if present, and otherwise leaves $T$ unchanged.
The table algebra $\mathbf{Tab}(G):=(\mathrm{Tab}(G),\Join,\emptyset,\{\langle\rangle\},\mathrm{del}_x,E_{xy},\mathrm{schema})_{x,y \in \mathbf{var}}$
is an extension of the bounded semilattice $(\mathrm{Tab}(G),\Join,\emptyset,\{\langle\rangle\})$.
\begin{definition}
A \emph{conjunctive table algebra (with equality)} with base $G$ is a subalgebra of $\mathbf{Tab}(G)$.
\end{definition}
We now consider some derived operations, which exist in every conjunctive table algebra (with equality).
In analogy to generalized cylindrifications and generalized diagonals, which are defined in cylindric algebras (see~\cite{HMT71}),
we define the \emph{generalized deletion}
$\mathrm{del}_{X}(T) := \mathrm{del}_{z_1}\dots\mathrm{del}_{z_n}(T)$ for all $X:=\{z_1,\dots,z_n\} \in \mathfrak{P}_{\mathrm{fin}}(\mathbf{var})$,
and the \emph{generalized equality table} $E_{\varrho} := \bigjoin_{(x,y) \in \theta} E_{xy}$ for all finite
$\varrho \subseteq \mathbf{var} \times \mathbf{var}$. In particular, we have $\mathrm{del}_{\emptyset}(T)=T$ and $E_{\emptyset}=\{\langle\rangle\}$.
The \emph{projection operation} is defined by $\mathrm{proj}_Y(T) := \mathrm{del}_{\mathrm{X\setminus Y}}(T)$ for all $T \in \mathrm{Tab}(G)[X]$
and $Y \subseteq X$. The \emph{duplication operation} is defined by $\mathrm{dup}_{xy}(T) := T \Join E_{xy}$ for all $T \in \mathrm{Tab}(G)[X]$,
$x \in X$ and $y \in \mathbf{var}\setminus X$. It creates a new column $y$, which is a copy of $x$. If the column $x$ is then deleted,
we have effectively renamed the column $x$ into $y$. The \emph{renaming operation} is thus defined by
$\mathrm{rnm}_{xy}(T) := \mathrm{del}_x(T \Join E_{xy})$ for all $T \in \mathrm{Tab}(G)[X]$, $x \in X$ and $y \in \mathbf{var}\setminus X$.
By repeated application, an arbitrary one-to-one renaming of columns can be performed. We call a table algebra a \emph{DPJR algebra}
if it is closed under duplication, projection, natural join, and renaming. So every conjunctive table algebra (with equality)
is a DPJR algebra. Note that $\mathrm{del}_x$ can be obtained from projection; so conjunctive table algebras are characterized as
DPJR algebras which contain $\emptyset$, $\{\langle\rangle\}$ and $D_{xy}$ for $x,y \in \mathbf{var}$. We can improve a bit on the characterization,
and only require DPJR algebras which contain $\emptyset$ and $D_{xx}$ for some $x \in \mathbf{var}$: the tables $D_{yz}$ can be obtained from $D_{xx}$
by renaming and duplication, and we have $\{\langle\rangle\} = \mathrm{proj}_{\emptyset}(D_{xx})$.

A comparison with the well-established SPJR algebra~\cite{AHV95} is in order. SPJR algebra defines two kinds of \emph{selection operations},
in addition to projection, natural join and renaming. The first kind of selection is defined by $\sigma_{x=y}(T) := \{t \in T \mid t(x)=t(y)\}$
for all $T \in \mathrm{Tab}(G)[X]$ and $x,y \in X$. It can be derived in a conjunctive table algebra (with equality),
since $\sigma_{x=y}(T) = T \Join E_{xy}$. The definition is the same as for duplication above, except that we require $x,y \in X$ here.
DPJR algebra is equivalent to SPJR algebra with only this first kind of selection, since $\sigma_{x=y}$ and $\mathrm{dup}_{xy}$ can be
defined in terms of each other: we have $\sigma_{x=y}(T) = T \Join (\mathrm{dup}_{xy}(\mathrm{proj}_{\{x\}}(T)))$ and
$\mathrm{dup}_{xy}(T) = \sigma_{x=y}(T \Join \mathrm{rnm}_{xy}(\mathrm{proj}_{\{x\}}(T)))$. The second kind of selection is defined by
$\sigma_{x=g}(T) := \{t \in T \mid t(x)=g\}$ for all $T \in \mathrm{Tab}(G)[X]$, $x \in X$ and $g \in G$. Conjunctive table algebras are
generally not closed under this second kind of selection (as an example, consider the smallest subalgebra of $\mathbf{Tab}(G)$,
which contains only the generalized equality tables $E_{\varrho}$ and the empty table).

We can represent projection, renaming and duplication by a single operation. The \emph{outer composition} is defined by
$T \circ \lambda := \{t \circ \lambda \mid t \in T\}$ for all $X,Y \in \mathfrak{P}_{\mathrm{fin}}(\mathbf{var})$, $T \in \mathrm{Tab}(G)[Y]$
and $\lambda:X \rightarrow Y$. We then have $T \circ \lambda \in \mathrm{Tab}(G)[X]$.
The projection $\mathrm{proj}_X$ is represented by the \emph{natural inclusion} $\iota_X:X \rightarrow Y$
(defined by $\iota_X(x)=x$ for all $x \in X$, where $X \subseteq Y$ is assumed); we have $\mathrm{proj}_X(T) = T \circ \iota_X$
for all $T \in \mathrm{Tab}(G)[Y]$.
A bijection $\xi:X \rightarrow Y$ renames column $\xi(x)$ to $x$ for each $x \in X$, i.e. $\xi$ maps the new column names to the old column names.
This generalizes the single-column renaming defined above: we obtain $\mathrm{rnm}_{yx}(T) = T \circ \xi$ for the bijection
$\xi:X \rightarrow Y$ with $X=(Y\setminus\{y\})\cup\{x\}$, $\xi(x)=y$ and $\xi(z)=z$ for $z \in Y\setminus\{y\}$.
Finally, we call $\delta:X \rightarrow Y$ a \emph{folding} if $Y \subseteq X$ and $\delta(x)=x$ for all $x \in Y$. Then $T \circ \delta$
has all columns of $T$, and additionally an $x$-column for all $x \in X\setminus Y$, which is a copy of the $\delta(x)$-column.
This generalizes the single-column duplication defined above. For composition with arbitrary $\lambda:X \rightarrow Y$, we consult the following lemma.
\begin{lemma}[Decomposition Lemma]
Every function $\lambda:X \rightarrow Y$ has a decomposition $\lambda = \iota \circ \xi \circ \delta$ into a folding $\delta:X \rightarrow Z_1$,
a bijection $\xi:Z_1 \rightarrow Z_2$, and a natural inclusion $\iota:Z_2 \rightarrow Y$.
\end{lemma}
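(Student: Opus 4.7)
The plan is to define the intermediate sets $Z_1, Z_2$ and the maps $\delta, \xi, \iota$ explicitly in terms of $\lambda$, and then verify the three type constraints (folding, bijection, natural inclusion) together with the composition identity.

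First I would choose $Z_2 := \lambda(X)$, the image of $\lambda$. Since $Z_2 \subseteq Y$, the natural inclusion $\iota : Z_2 \to Y$ is well-defined. Next, for $Z_1$ I would pick one representative from each fiber of $\lambda$: for every $z \in Z_2$, select some $x_z \in \lambda^{-1}(z)$ (this is a finite choice, since $X, Y$ are finite), and set $Z_1 := \{x_z \mid z \in Z_2\} \subseteq X$. Define $\xi : Z_1 \to Z_2$ by $\xi(x_z) := z$; this is a bijection by construction, as the $x_z$ are pairwise distinct and exhaust $Z_1$, and the $z$ are pairwise distinct and exhaust $Z_2$. Finally, define $\delta : X \to Z_1$ by $\delta(x) := x_{\lambda(x)}$, i.e.\ send each $x$ to the chosen representative of its fiber.

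Then I would verify the three claims. For the folding property of $\delta$: if $x \in Z_1$, then $x = x_z$ for some $z$, so $\lambda(x) = z$ and hence $\delta(x) = x_{\lambda(x)} = x_z = x$, as required. The inclusion $Z_1 \subseteq X$ is immediate. For the composition identity, for any $x \in X$ we have
\[
  (\iota \circ \xi \circ \delta)(x) = \iota(\xi(x_{\lambda(x)})) = \iota(\lambda(x)) = \lambda(x),
\]
using that $\lambda(x) \in Z_2$ in the last step.

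There is no real obstacle here; the only subtle point is picking the representatives $x_z$, but since $X$ is finite this requires no appeal to choice. The lemma then follows.
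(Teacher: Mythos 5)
Your construction is correct and is essentially the paper's own proof: choosing one representative per fiber of $\lambda$ to form $Z_1$ is exactly the paper's ``$Z_1 \subseteq X$ minimal with $\lambda(Z_1)=\lambda(X)$'', your $\xi$ is the corestriction $\lambda|_{Z_1}^{Z_2}$, and your $\delta$ is the same fiber-representative map. The verification of the folding property and the composition identity matches the paper's argument, just spelled out in more detail.
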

\begin{proof}
Let $Z_1 \subseteq X$ be minimal with $\lambda(Z_1)=\lambda(X)=:Z_2$. Then for each $x \in X$ there is a unique $\delta(x) \in Z_1$ with
$\lambda(x)=\lambda(\delta(x))$. We thus obtain a folding $\delta:X \rightarrow Z_1$ with $\lambda = \lambda|_{Z_1} \circ \delta$.
Moreover, the function $\lambda_{Z_1}^{Z_2}:Z_1 \rightarrow Z_2$, given by $\lambda|_{Z_1}^{Z_2}(z)=\lambda(z)$ for $z \in Z_1$,
is a bijection, and we have $\lambda|_{Z_1} = \iota_{Z_2} \circ \lambda|_{Z_1}^{Z_2}$, where $\iota_{Z_2}:Z_2 \rightarrow Y$ is the natural inclusion.
\qed
\end{proof}
We also note that composition can be sequentialized, i.e. $T \circ (\nu \circ \mu) = (T \circ \nu) \circ \mu$ for all
$T \in \mathrm{Tab}(G)[Z]$, $\nu:Y \rightarrow Z$ and $\mu:X \rightarrow Y$. Applied to the decomposition lemma, this means
$T \circ \lambda = T \circ (\iota \circ \xi \circ \delta) = ((T \circ \iota) \circ \xi) \circ \delta$; or put in words, composition with $\lambda$
amounts to performing a projection, then a renaming, and then a duplication. This shows that every DPJR algebra, and thus every
conjunctive table algebra (with equality), is closed under composition.

\section{Projectional Semilattices}
\label{pslat0}
In this section, we formulate axioms for conjunctive table algebras (with equality), in the style of the axioms for cylindric algebras.
\begin{definition}
A \emph{projectional semilattice} is an algebra $(V,\wedge,0,1,c_x,d_{xy},\dom)_{x,y \in \mathbf{var}}$
consisting of an infimum operation $\wedge$, a \emph{bottom element} $0$, a \emph{top element} $1$,
a \emph{cylindrification} $c_x:V \rightarrow V$ for each $x \in \mathbf{var}$,
a \emph{diagonal} $d_{xy} \in V$ for each $(x,y) \in \mathbf{var} \times \mathbf{var}$,
and a \emph{domain function} $\dom:V \rightarrow \mathfrak{P}(\mathbf{var})$,
such that the axioms
\begin{enumerate}[label={\bf(PS\arabic*)},start=0,leftmargin=1.5cm]
\item \label{psaxslat0} $(V,\wedge,0,1)$ is a bounded semilattice
\item \label{psaxzero0} $c_x(0) = 0$
\item \label{psaxproj0} $u \leq c_x(u)$
\item \label{psaxdist0} $c_x(u \wedge c_x(v)) = c_x(u) \wedge c_x(v)$
\item \label{psaxcomm0} $c_x(c_y(u)) = c_y(c_x(u))$
\item \label{psaxdim0} $u \neq 0 \Rightarrow (u \neq c_x(u) \Leftrightarrow u \leq d_{xx})$
\item \label{psaxdxyz0} $x \neq y,z \Rightarrow d_{yz} = c_x(d_{yx} \wedge d_{xz})$
\item \label{psaxequal0} $x \neq y \Rightarrow d_{xy} \wedge c_x(d_{xy} \wedge u) \leq u$
\item \label{psaxfinite0} $u \neq 0 \Rightarrow \dom(u) \text{ finite}$
\item \label{psaxdom0} $\dom(u) = \{x \in \mathbf{var} \mid u \leq d_{xx}\}$
\item \label{psaxone0} $\dom(u) = \emptyset \Rightarrow u = 1$
\item \label{psaxdxx0} $d_{xx} \neq 0$
\item \label{psaxsym0} $d_{xy} = d_{yx}$
\end{enumerate}
hold for all $u,v \in V$ and $x,y,z \in \mathbf{var}$.
\end{definition}

\begin{theorem}
Every conjunctive table algebra (with equality) is a projectional semilattice.
\end{theorem}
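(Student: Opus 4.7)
The plan is to verify each axiom (PS0)--(PS12) inside the algebra $\mathbf{Tab}(G)$ itself; since every conjunctive table algebra is a subalgebra, this suffices. I use the identifications $\wedge = {\Join}$, $0 = \emptyset$, $1 = \{\langle\rangle\}$, $c_x = \mathrm{del}_x$, $d_{xy} = E_{xy}$, and $\dom = \mathrm{schema}$. The bounded-semilattice axiom (PS0) has already been stated in the excerpt, and (PS1), (PS4), (PS8), (PS10), (PS12) fall out immediately from the definitions of $\mathrm{del}_x$, $E_{xy}$, and $\mathrm{schema}$. Axiom (PS11) forces the tacit assumption that $G \neq \emptyset$, so that $E_{xx} = G^{\{x\}}$ is nonempty.

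The schema-sensitive axioms reduce, via Proposition~\ref{torder0}, to one key preliminary observation: for every $T$, $T \leq E_{xx} \Leftrightarrow x \in \mathrm{schema}(T)$ (including $T = \emptyset$, whose schema is $\mathbf{var}$ and which is $\leq$-minimum), because $E_{xx}$ is the top of the $\{x\}$-slice. This equivalence is exactly (PS9), and it reduces (PS5) to noting that for nonempty $T$, the table $\mathrm{del}_x(T)$ is nonempty with schema $\mathrm{schema}(T)\setminus\{x\}$, hence differs from $T$ iff $x \in \mathrm{schema}(T)$. Axiom (PS2) is a direct reading of Proposition~\ref{torder0} on the schema/restriction structure of $\mathrm{del}_x$. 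For (PS6), one computes that $E_{yx} \Join E_{xz}$ has schema $\{x,y,z\}$ (collapsing suitably when $y=z$) with rows satisfying $t(y)=t(x)=t(z)$, so deleting $x$ yields precisely $E_{yz}$.

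The algebraically substantive axioms are (PS3) and (PS7). For (PS3), after dispatching the empty cases, both sides of $\mathrm{del}_x(T_1 \Join \mathrm{del}_x(T_2)) = \mathrm{del}_x(T_1) \Join \mathrm{del}_x(T_2)$ have schema $(\mathrm{schema}(T_1) \cup \mathrm{schema}(T_2))\setminus\{x\}$, and a row $t$ lies in either side iff some $x$-extension $\tilde t$ satisfies $\tilde t|_{\mathrm{schema}(T_1)} \in T_1$ together with $t|_{\mathrm{schema}(T_2)\setminus\{x\}} \in \mathrm{del}_x(T_2)$; the second condition is independent of the extension since $x \notin \mathrm{schema}(\mathrm{del}_x(T_2))$, which is what makes the two sides agree. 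For (PS7), set $S := E_{xy} \Join \mathrm{del}_x(E_{xy} \Join T)$ and $X := \mathrm{schema}(T)$; tracking schemas gives $\mathrm{schema}(S) = X \cup \{x,y\} \supseteq X$, and given any $s \in S$ one picks a witness $r \in E_{xy} \Join T$ with $r|_{(X\setminus\{x\})\cup\{y\}} = s|_{(X\setminus\{x\})\cup\{y\}}$; then $r(x)=r(y)=s(y)=s(x)$ forces $r|_X = s|_X$, and since $r|_X \in T$, Proposition~\ref{torder0} yields $S \leq T$. The main obstacle is bookkeeping: in every axiom one must carefully track whether $x$, $y$, $z$ lie in the relevant schemas, and the slightly delicate point in (PS7) is to give a single row-level argument valid regardless of which of $x, y$ happens to lie in $\mathrm{schema}(T)$.
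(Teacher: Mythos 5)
Your proof is correct and follows essentially the same route as the paper: the same identification of operations, the same key equivalence $T \leq E_{xx} \Leftrightarrow x \in \mathrm{schema}(T)$ driving (PS5) and (PS9), the same commutation fact $\del_x(S \Join T) = \del_x(S) \Join T$ for $x \notin \mathrm{schema}(T)$ behind (PS3), and a row-level argument for (PS7) matching the paper's observation that deleting $x$ from $E_{xy} \Join T$ and re-duplicating $y$ restores the table. Your explicit remark that (PS11) forces $G \neq \emptyset$ is slightly more careful than the paper's proof, which omits this and only concedes after the theorem that (PS11) excludes $\mathbf{Tab}(\emptyset)$.
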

\begin{proof}
We have obtained~\ref{psaxslat0} in Sect.~\ref{ctas0}. Axioms~\ref{psaxzero0} and~\ref{psaxproj0}, which state $\mathrm{del}_x(\emptyset)=\emptyset$ 
and $T \leq \mathrm{del}_x(T)$, follow from the definitions of $\del_x$ and $\leq$ in Sect.~\ref{ctas0}.
If $x \not\in \mathrm{schema}(T)$, we have $\del_x(S \Join T) = \del_x(S) \Join T$,
i.e. the result of the join is not affected by whether we delete $x$ before
or afterwards. Specifically, setting $T:=\mathrm{del}_x(R)$ for any table $R$, we obtain
$\del_x(S \Join \del_x(R)) = \del_x(S) \Join \del_x(R)$, which shows~\ref{psaxdist0}.
Axiom~\ref{psaxcomm0} is easy to see, and to verify~\ref{psaxdim0} and~\ref{psaxdom0}, we use
$T \leq E_{xx} \Leftrightarrow x \in \mathrm{schema}(T)$. We finally show~\ref{psaxequal0},
the remaining axioms are easy to see. The table $S := E_{xy} \Join T$ has identical columns $x$ and $y$.
So deleting $x$ and then duplicating $y$ into $x$ (by joining with $E_{xy}$) restores the table:
$E_{xy} \Join \del_x(S) = S$. Thus $E_{xy} \Join \del_x(E_{xy} \Join T) = E_{xy} \Join T \leq T$.
\qed
\end{proof}
The axioms~\ref{psaxzero0}, \ref{psaxproj0}, \ref{psaxdist0}, \ref{psaxcomm0} and \ref{psaxdxyz0}
are precisely the axioms $(C_1)$, $(C_2)$, $(C_3)$, $(C_4)$ and $(C_6)$ for cylindric algebras,
as printed e.g. in~\cite{HMT71}. Axiom~\ref{psaxslat0} corresponds to axiom $(C_0)$, which states that
cylindric algebras are (extensions of) Boolean algebras; obviously, axiom $(C_0)$ itself does not apply here, because
projectional semilattices do not provide supremum or complement operations. 
Likewise, axiom $(C_7)$ in~\cite{HMT71} involves the complement operation,
so it does not apply here; but interestingly, axiom \ref{psaxequal0} was used as an earlier version of $(C_7)$,
as related by Henkin et al. (cf. the footnote on p.\,162f. in~\cite{HMT71}). Solely the axiom $(C_5)$,
which states $d_{xx}=1$, contrasts strongly with the projectional semilattice axioms; the axiom~\ref{psaxdim0}
specifies the exact conditions for when $u \leq d_{xx}$ holds; we shall get back to~\ref{psaxdim0} shortly.
Axiom~\ref{psaxdom0} defines the function $\dom:V \rightarrow \mathfrak{P}(\mathbf{var})$, which abstractly
captures the table schema: for a table $T$, we have $x \in \mathrm{schema}(T)$ if and only if $T \leq E_{xx}$.
One of the fundamental notions for cylindric algebras is the \emph{dimension set}~\cite{HMT71};
we state it here as $\dim(u):=\{x \in \mathbf{var} \mid c_x(u) \neq u\}$ (although conventionally the notation $\Delta(u)$ is used).
Axiom~\ref{psaxdim0} thus states that $\dim(u)=\dom(u)$ for all $u \neq 0$ (we have $\dim(0)=\emptyset$ by~\ref{psaxzero0},
and $\dom(0)=\mathbf{var}$ by~\ref{psaxdom0}). There was originally an axiom $(C_8)$ for cylindric algebras,
which stated that all elements have a finite dimension set~\cite[p.\,10]{HMT71};
with the previous observation, we can recognize~\ref{psaxfinite0} as the axiom $(C_8)$.
Likewise, axiom~\ref{psaxone0} states that $\dim(u)=\emptyset$ only holds for $0$ and $1$;
the statement does not hold for cylindric algebras in general, but it holds for all regular cylindric set algebras
(see~\cite[Sect.\,12]{Monk93} for a definition of regularity); and it may be noted that Imieli\'{n}ski and Lipski's "embedding" $h$
(cf. Sect.~\ref{ctas0}) identifies every table algebra with a regular cylindric set algebra. Axiom~\ref{psaxdxx0} excludes the
degenerate table algebra $\mathbf{Tab}(\emptyset)$, which contains only $\emptyset$ and $\{\langle\rangle\}$.
While~\ref{psaxsym0} is derived for cylindric algebras~\cite[Thm.\,1.3.1]{HMT71}, it can not be derived from~\ref{psaxslat0} to~\ref{psaxdxx0};
to see this, consider the table algebra $\mathbf{Tab}(\{g\})$ over a singleton set,
with the alternative "bogus diagonal" $d_{xy}:=E_{xx}$; it satisfies~\ref{psaxslat0} to~\ref{psaxdxx0}, but not~\ref{psaxsym0}.

We now translate a few notions, that we have defined for tables in Sect.~\ref{ctas0}, into the abstract setting.
For each $X \in \mathfrak{P}_{\mathrm{fin}}(\mathbf{var})$,
the set $\mathbf{V}^{*}[X] := \{v \in V \mid \dom(v)=X\}$ collects all elements with domain $X$,
and $\mathbf{V}[X] := \mathbf{V}^{*} \cup \{0\}$ is the \emph{$X$-slice} of $\mathbf{V}$.
By~\ref{psaxcomm0}, the \emph{generalized cylindrification} $C_{\{z_1,\dots,z_n\}}(u) := c_{z_1}\dots c_{z_n}(u)$ is well-defined
for all $u \in V$. In particular, $C_{\emptyset}(u)=u$. The \emph{generalized diagonal} is defined by
$e_{\varrho} := \bigwedge_{(x,y) \in \varrho} d_{xy}$ for all finite $\varrho \subseteq \mathbf{var} \times \mathbf{var}$.
In particular, $e_{\emptyset} = 1$ is the empty infimum, and we have $e_{\lambda} = \bigwedge_{x \in X} d_{x\lambda(x)}$ for
a function $\lambda:X \rightarrow Y$, treating $\lambda$ as a relation. Defining \emph{outer composition} on $\mathbf{V}$ is
a bit technical. We call $\lambda:X \rightarrow Y$ \emph{domain-disjoint}, if $X \cap Y = \emptyset$. Concretely, for $T \in \mathrm{Tab}(G)[Y]$
and domain-disjoint $\lambda:X \rightarrow Y$, we have $T \circ \lambda = \del_X(T \Join E_{\lambda})$;
and for arbitrary $\lambda = \nu \circ \mu$, we have $T \circ \lambda = T \circ \nu \circ \mu$. This motivates the two-stage definition
\begin{align}
\label{outercomp0}
u \odot \lambda := \left\{\begin{array}{cl}
C_Y(u \wedge e_{\lambda}) &\quad \text{if $\lambda$ is domain-disjoint} \\
u \odot \xi_{XY} \odot (\xi_{XY}^{-1} \circ \lambda) &\quad \text{otherwise}
\end{array}\right. \quad,
\end{align}
where $\xi_{XY}:Z_{XY} \rightarrow Y$ is some fixed bijection, depending only on $X$ and $Y$,
and defined on a set $Z_{XY} \in \mathfrak{P}_{\mathrm{fin}}(\mathbf{var})$ with $Z_{XY} \cap (X \cup Y) = \emptyset$.
Then $\lambda = \xi_{XY} \circ \xi_{XY}^{-1}$ is a decomposition into domain-disjoint functions,
and~\eqref{outercomp0} is thus well-defined. We assume that~\ref{outercomp0} defines different operations $\odot_{XY}$,
one for each pair $(X,Y) \in \mathfrak{P}_{\mathrm{fin}}(\mathbf{var}) \times \mathfrak{P}_{\mathrm{fin}}(\mathbf{var})$,
but denote them by the same symbol $\odot$ for better readability. Proposition~\ref{ocomp0} will show that, as would be expected,
the definition in~\eqref{outercomp0} does not depend on the particular choice of the $\xi_{XY}$,
and uniquely characterizes the family $(\odot_{XY})_{X,Y \in \mathfrak{P}_{\mathrm{fin}}(\mathbf{var})}$ by its properties,
which makes the above definition obsolete.

We present a series of propositions, concerning the basic operations (Prop.~\ref{derived0}), domains/slices (Prop.~\ref{domains0}),
generalized cylindrification (Prop.~\ref{gency0}), genalized diagonals (Prop.~\ref{eqcomp0}), and the latter two together (Prop.~\ref{gencydiag0}),
before we can characterize outer composition (Prop.~\ref{ocomp0}). The final result, which is obvious for tables, can only be shown at the
end of this section, and fully integrates outer composition with the remaining operations (Prop.~\ref{proj0}).
The proofs can be found in Sect.~\ref{proofs0} in the appendix.
\begin{proposition}
\label{derived0}
\begin{enumerate}[label=\roman*),leftmargin=1cm]
\item \label{derived1} $c_x(c_x(v)) = c_x(v)$
\item \label{derived2} $u \leq v \Rightarrow c_x(u) \leq c_x(v)$
\item \label{derived3} if $x \not\in \dom(v)$, then $c_x(u \wedge v) = c_x(u) \wedge v$
\item \label{derived4} if $x \neq y$ and $u \leq d_{xy}$, then $d_{xy} \wedge c_x(u) = u$
\item \label{derived5} if $x \neq y$, then $c_x(d_{xy}) = d_{yy}$
\item \label{derived6} $d_{xz} \wedge d_{zy} \leq d_{xy}$
\end{enumerate}
\end{proposition}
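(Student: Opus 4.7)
The plan is to derive each item by a short application of one or two projectional semilattice axioms; I will treat them in the listed order, since the first two expose the basic manipulation patterns used later.

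For (i), I would apply~\ref{psaxdist0} with the substitution $u := c_x(v)$: the left-hand side collapses to $c_x(c_x(v))$ and the right-hand side to $c_x(c_x(v)) \wedge c_x(v)$, forcing $c_x(c_x(v)) \leq c_x(v)$; the reverse inequality is~\ref{psaxproj0} applied to $c_x(v)$. Monotonicity (ii) follows the same template: $u \leq v$ together with~\ref{psaxproj0} gives $u \wedge c_x(v) = u$, and~\ref{psaxdist0} then yields $c_x(u) = c_x(u) \wedge c_x(v)$.

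The essential content of (iii) is that $v \neq 0$ and $x \notin \dom(v)$ together force $v = c_x(v)$. Using~\ref{psaxdom0}, the hypothesis $x \notin \dom(v)$ rewrites as $v \not\leq d_{xx}$; the contrapositive of~\ref{psaxdim0} then gives $v = c_x(v)$, after which~\ref{psaxdist0} yields $c_x(u \wedge v) = c_x(u) \wedge v$ in one step. The degenerate case $v = 0$ is handled separately by~\ref{psaxzero0}, which makes both sides zero.

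The remaining items reduce almost mechanically to the obvious axioms. For (iv), the hypothesis $u \leq d_{xy}$ turns $d_{xy} \wedge u$ into $u$, so~\ref{psaxequal0} becomes $d_{xy} \wedge c_x(u) \leq u$, and the reverse direction follows from~\ref{psaxproj0} plus the hypothesis. For (v), instantiating~\ref{psaxdxyz0} with both side-indices equal to $y$ gives $d_{yy} = c_x(d_{yx} \wedge d_{xy})$, which collapses to $c_x(d_{xy})$ by symmetry~\ref{psaxsym0}. For (vi), I do a case split on whether $z \in \{x,y\}$: the two boundary cases are absorbed by the semilattice axiom~\ref{psaxslat0}, while for $z \notin \{x,y\}$ the axiom~\ref{psaxdxyz0} applied with $z$ as the bound variable gives $d_{xy} = c_z(d_{xz} \wedge d_{zy})$, and~\ref{psaxproj0} finishes. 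The only mildly subtle step is the contrapositive reasoning in (iii), which requires separating $v = 0$ from $v \neq 0$ and linking~\ref{psaxdim0} with~\ref{psaxdom0}; the rest is essentially bookkeeping.
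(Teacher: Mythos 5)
Your proposal is correct and follows essentially the same route as the paper's proof: each item is derived from the same axioms in the same way (PS2/PS3 for i--iii, PS7 for iv, PS6 with PS12 for v, and the same three-way case split for vi). The only cosmetic difference is in (iii), where you treat $v=0$ as a separate (in fact vacuous) case, whereas the paper notes that $v \not\leq d_{xx}$ already forces $v \neq 0$.
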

\begin{proposition}
\label{domains0}
\begin{enumerate}[label=\roman*),leftmargin=1cm]
\item \label{domains1} $d_{xy} \in \mathbf{V}^{*}[\{x,y\}]$
\item \label{domains2} if $u \in \mathbf{V}[X]$ and $v \in \mathbf{V}[Y]$, then $u \wedge v \in \mathbf{V}[X \cup Y]$
\item \label{domains3} if $u \in \mathbf{V}^{*}[Y]$, then $C_Z(u) \in \mathbf{V}^{*}[Y \setminus Z]$
\item \label{domains4} if $\varrho \subseteq X \times Y$, then $e_{\varrho} \in \mathbf{V}^{*}[\mathrm{field}(\varrho)]$,\\
where $\mathrm{field}(\varrho):=\bigcup \{\{x,y\} \mid (x,y)\in\varrho\}$
\end{enumerate}
\end{proposition}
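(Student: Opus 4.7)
The plan is to prove the four items in order, since each leans on its predecessors. For~\ref{domains1}, I would fix $x, y$ and show $\dom(d_{xy}) = \{x, y\}$ by PS9 in both directions. For $\supseteq$, Prop.~\ref{derived0}(v) combined with PS2 and PS12 gives $d_{xy} \leq c_x(d_{xy}) = d_{yy}$ when $x \neq y$, and symmetrically $d_{xy} \leq d_{xx}$; when $x = y$ this collapses to reflexivity. For $\subseteq$, given any $z \notin \{x, y\}$, axiom~\ref{psaxdxyz0} supplies $d_{xy} = c_z(d_{xz} \wedge d_{zy})$, and applying cylindrification idempotency (Prop.~\ref{derived0}(i)) yields $c_z(d_{xy}) = d_{xy}$, so~\ref{psaxdim0} forces $z \notin \dom(d_{xy})$. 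Non-triviality $d_{xy} \neq 0$ comes from $c_x(d_{xy}) = d_{yy} \neq 0$ (via Prop.~\ref{derived0}(v) and~\ref{psaxdxx0}) together with~\ref{psaxzero0}.

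Part~\ref{domains2} is a short domain bookkeeping: the cases where any of $u$, $v$, or $u \wedge v$ is $0$ land trivially in $\mathbf{V}[X \cup Y]$, and otherwise monotonicity gives $\dom(u \wedge v) \supseteq X \cup Y$, while for $z \notin X \cup Y$, \ref{psaxdim0} gives $c_z(u) = u$ and $c_z(v) = v$, so~\ref{psaxdist0} collapses $c_z(u \wedge v) = c_z(u \wedge c_z(v)) = c_z(u) \wedge c_z(v) = u \wedge v$, excluding $z$ from $\dom(u \wedge v)$ via~\ref{psaxdim0}. Part~\ref{domains3} reduces, via~\ref{psaxcomm0} and iteration, to the case $Z = \{z\}$; there I would verify $\dom(c_z(u))$ elementwise: $z$ is excluded via Prop.~\ref{derived0}(i) and~\ref{psaxdim0}, each $w \in Y \setminus \{z\}$ stays because $u = u \wedge d_{ww}$ and Prop.~\ref{derived0}(iii) (legitimate since $\dom(d_{ww}) = \{w\}$ by~\ref{domains1} and $z \neq w$) gives $c_z(u) = c_z(u) \wedge d_{ww}$, and each $w \notin Y \cup \{z\}$ is excluded because $c_w(u) = u$ by~\ref{psaxdim0} and hence $c_w(c_z(u)) = c_z(c_w(u)) = c_z(u)$ via~\ref{psaxcomm0}.

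Part~\ref{domains4} is where the main work lies. Parts~\ref{domains1} and~\ref{domains2} already place $e_\varrho \in \mathbf{V}[\mathrm{field}(\varrho)]$, so the task reduces to showing $e_\varrho \neq 0$ (plus, for $\varrho = \emptyset$, that $\dom(1) = \emptyset$; this I would deduce from~\ref{domains1} by noting that $d_{xx} = 1$ would force $d_{yy} \leq d_{xx}$ and hence $x \in \dom(d_{yy}) = \{y\}$ for every $y \neq x$, contradicting the infinitude of $\mathbf{var}$). For $\varrho \neq \emptyset$, I would fix some $z \in F := \mathrm{field}(\varrho)$ and prove by induction on $|F|$ that $u_F := \bigwedge_{w \in F} d_{zw} \neq 0$. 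In the step $F = F' \cup \{w^{*}\}$, with $u := u_{F'}$ non-zero and $\dom(u) = F'$ by induction and~\ref{domains2}, the combination of Prop.~\ref{derived0}(iii) (using $w^{*} \notin \dom(u)$) and Prop.~\ref{derived0}(v) produces the fixed-point identity
\[
c_{w^{*}}(u \wedge d_{zw^{*}}) = u \wedge c_{w^{*}}(d_{zw^{*}}) = u \wedge d_{zz} = u \neq 0,
\]
so $u \wedge d_{zw^{*}} \neq 0$ by~\ref{psaxzero0}. Finally, Prop.~\ref{derived0}(vi) and~\ref{psaxsym0} imply $u_F \leq d_{ab}$ for every $(a, b) \in \varrho \subseteq F \times F$, whence $u_F \leq e_\varrho$ and $e_\varrho \neq 0$. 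The obstacle to overcome is precisely this last inductive step: without the cancellation furnished by Prop.~\ref{derived0}(iii) and~(v), a naive meet of diagonals could in principle collapse to $0$.
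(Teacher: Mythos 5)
Your proposal is correct and follows essentially the same route as the paper's proof: parts \ref{domains1}--\ref{domains3} use the same lemmas, and for part \ref{domains4} you use the identical key mechanism (the fixed-point computation $c_{w}(u \wedge d_{zw}) = u \wedge c_{w}(d_{zw}) = u \wedge d_{zz} = u$ via Prop.~\ref{derived0}\ref{derived3}, \ref{derived5} and~\ref{psaxzero0}, followed by transitivity Prop.~\ref{derived0}\ref{derived6} to dominate $e_{\varrho}$). The only cosmetic difference is that you induct on a star of diagonals centered at a chosen $z \in \mathrm{field}(\varrho)$, whereas the paper uses the canonical stars $\theta_n$ on the first $n$ variables; your explicit verification that $\dom(1)=\emptyset$ is a welcome extra detail.
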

\begin{proposition}
\label{gency0}
\begin{enumerate}[label=\roman*),leftmargin=1cm]
\item \label{gency1} $C_X(0) = 0$
\item \label{gency2} $C_X(C_Y(u)) = C_Y(C_X(u))$
\item \label{gency3} if $Z \cap \dom(v) = \emptyset$, then $C_Z(u \wedge v) = C_Z(u) \wedge v$
\end{enumerate}
\end{proposition}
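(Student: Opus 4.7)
All three parts reduce to straightforward inductions on the size of the finite index set, using the atomic cylindrification axioms plus the already-proved facts in Prop.~\ref{derived0}. The first preparatory observation is that $C_X$ is well-defined independently of the enumeration $z_1,\dots,z_n$ of $X$: this is immediate from \ref{psaxcomm0}, as already noted in the excerpt. Once this is in hand, each part unfolds one outermost cylindrification at a time.

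\textbf{Part (i).} I would induct on $n = |X|$. The base case $X = \emptyset$ is trivial since $C_\emptyset(0) = 0$. For the inductive step, write $X = X' \cup \{x\}$ with $x \notin X'$; then $C_X(0) = c_x(C_{X'}(0)) = c_x(0) = 0$ by the inductive hypothesis and \ref{psaxzero0}.

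\textbf{Part (ii).} This is essentially a permutation-of-operators argument. By definition $C_X(C_Y(u))$ is a composition of $|X|+|Y|$ atomic cylindrifications, indexed by the elements of $X$ followed by those of $Y$, and \ref{psaxcomm0} shows that any two adjacent cylindrifications commute. A finite sequence of such transpositions rearranges the composition into the reversed order, yielding $C_Y(C_X(u))$. Formally this is an induction on $|X|$: for $X = X' \cup \{x\}$, $C_X(C_Y(u)) = c_x(C_{X'}(C_Y(u))) = c_x(C_Y(C_{X'}(u)))$ by the inductive hypothesis, and then $c_x$ can be pulled past each $c_y$ in $C_Y$ by \ref{psaxcomm0}, giving $C_Y(c_x(C_{X'}(u))) = C_Y(C_X(u))$.

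\textbf{Part (iii).} Here I would again induct on $|Z|$, with the base case $Z = \emptyset$ immediate. For the inductive step, write $Z = Z' \cup \{z\}$ with $z \notin Z'$; then $Z' \cap \dom(v) = \emptyset$, so by the inductive hypothesis $C_{Z'}(u \wedge v) = C_{Z'}(u) \wedge v$. Applying $c_z$ to both sides gives $C_Z(u \wedge v) = c_z(C_{Z'}(u) \wedge v)$, and since $z \in Z$ is disjoint from $\dom(v)$, Prop.~\ref{derived0}\ref{derived3} yields $c_z(C_{Z'}(u) \wedge v) = c_z(C_{Z'}(u)) \wedge v = C_Z(u) \wedge v$.

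\textbf{Main obstacle.} None of the parts are deep: (i) and (ii) are bookkeeping with the atomic axioms, and the only non-axiomatic tool needed is \ref{derived3} in part (iii). The mildly delicate point is to make sure that when peeling off a single cylindrification $c_z$ in part (iii), the remaining set $Z'$ still satisfies the disjointness hypothesis with $\dom(v)$, which is immediate since $Z' \subseteq Z$.
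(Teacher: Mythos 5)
Your proposal is correct and follows exactly the route the paper intends: the paper's proof is a one-line remark that (i), (ii) and (iii) follow from \ref{psaxzero0}, \ref{psaxcomm0} and Prop.~\ref{derived0}\ref{derived3} respectively, and your inductions on the cardinality of the index set are precisely the straightforward unfolding of that remark.
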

\begin{proof}
Statements~\ref{gency1}, \ref{gency2} and~\ref{gency3} are easily obtained from~\ref{psaxzero0}, \ref{psaxcomm0}
and Prop.~\ref{derived0}\ref{derived3}, respectively.
\qed
\end{proof}
\begin{proposition}
\label{eqcomp0}
Let $\mu:X \rightarrow Z$ and $\nu: Z \rightarrow Y$.
\begin{enumerate}[label=\roman*),leftmargin=1cm]
\item \label{eqcomp1} $e_{\mu} \wedge e_{\nu} \leq e_{\nu \circ \mu}$
\item \label{eqcomp2} $e_{\mu} \wedge e_{\nu} = e_{\nu\circ\mu} \wedge e_{\nu}$
\item \label{eqcomp3} if $\mu$ is a folding, then $e_{\mu} \wedge e_{\nu} = e_{\nu\circ\mu}$
\end{enumerate}
\end{proposition}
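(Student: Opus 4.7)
The plan is to unpack the definition $e_\lambda = \bigwedge_{x \in \dom(\lambda)} d_{x,\lambda(x)}$ and reduce everything to two tools already available: transitivity of the diagonals (Prop.~\ref{derived0}\ref{derived6}, $d_{xz} \wedge d_{zy} \leq d_{xy}$) and symmetry (\ref{psaxsym0}, $d_{xy}=d_{yx}$). Throughout, a meet of conjuncts is bounded above by each conjunct, and to bound a meet from above by another generalized diagonal $e_\sigma$ it suffices to bound it by each $d_{x,\sigma(x)}$ separately.

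For part~\ref{eqcomp1}, I would argue index by index: for each $x \in X$, the element $d_{x,\mu(x)}$ is a conjunct of $e_\mu$, and since $\mu(x) \in Z$, the element $d_{\mu(x),\nu(\mu(x))}$ is a conjunct of $e_\nu$. Their meet lies below $d_{x,\nu(\mu(x))}$ by transitivity, so $e_\mu \wedge e_\nu \leq d_{x,(\nu\circ\mu)(x)}$ for every $x \in X$, and taking the infimum over $x$ yields $e_\mu \wedge e_\nu \leq e_{\nu\circ\mu}$.

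For part~\ref{eqcomp2}, the direction $\leq$ is immediate from \ref{eqcomp1}. For the reverse direction it suffices to prove $e_{\nu\circ\mu} \wedge e_\nu \leq d_{x,\mu(x)}$ for each $x \in X$. Here I would use \ref{psaxsym0} to rewrite the conjunct $d_{\mu(x),\nu(\mu(x))}$ of $e_\nu$ as $d_{\nu(\mu(x)),\mu(x)}$, and then combine with the conjunct $d_{x,\nu(\mu(x))}$ of $e_{\nu\circ\mu}$ via transitivity.

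For part~\ref{eqcomp3}, the inequality $e_\mu \wedge e_\nu \leq e_{\nu\circ\mu}$ is again \ref{eqcomp1}, so only $e_{\nu\circ\mu} \leq e_\mu \wedge e_\nu$ needs work. For $e_\nu$ the folding hypothesis $\mu|_Z = \mathrm{id}_Z$ makes every conjunct $d_{z,\nu(z)}$ of $e_\nu$ literally appear as the conjunct $d_{z,\nu(\mu(z))}$ of $e_{\nu\circ\mu}$ (since $z \in Z \subseteq X$). For $e_\mu$, given $x \in X$ with $y := \mu(x) \in Z \subseteq X$, both $d_{x,\nu(y)}$ and $d_{y,\nu(y)}$ appear among the conjuncts of $e_{\nu\circ\mu}$; symmetry on the second plus transitivity yield the needed bound by $d_{x,y}=d_{x,\mu(x)}$. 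The only real subtlety is this last step: the folding assumption is what guarantees that $y$ itself belongs to the index set $X$, so that $d_{y,\nu(y)}$ is already available inside $e_{\nu\circ\mu}$ without having to borrow it from $e_\nu$; outside the folding case, this extra conjunct is genuinely needed from $e_\nu$, which is why \ref{eqcomp3} strengthens \ref{eqcomp2}.
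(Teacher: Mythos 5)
Your proof is correct and follows essentially the same route as the paper's: all three parts are handled conjunct-by-conjunct using exactly the two tools you name, the transitivity $d_{xz}\wedge d_{zy}\leq d_{xy}$ of Prop.~\ref{derived0}\ref{derived6} and the symmetry axiom~\ref{psaxsym0}, with the folding property $\mu(\mu(x))=\mu(x)$ supplying the extra conjunct $d_{\mu(x),\nu(\mu(x))}$ inside $e_{\nu\circ\mu}$ in part~\ref{eqcomp3}. No gaps; your closing remark about why the folding hypothesis is what lets $e_{\nu\circ\mu}$ alone dominate $e_\mu\wedge e_\nu$ matches the structure of the paper's argument.
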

\begin{proposition}
\label{gencydiag0}
\begin{enumerate}[label=\roman*),leftmargin=1cm]
\item \label{gencydiag1} $C_X(u \wedge e_{\lambda}) = u$ for all domain-disjoint $\lambda:X \rightarrow Y$ and $u \in \mathbf{V}^{*}[Y]$
\item \label{gencydiag2} $C_X(u) \wedge e_{\lambda} = u$ for all domain-disjoint $\lambda:X \rightarrow Y$ and $u \leq e_{\lambda}$
\end{enumerate}
\end{proposition}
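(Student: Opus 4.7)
The plan is to handle the two parts separately, with both hinging on the domain-disjointness $X\cap Y=\emptyset$ so that cylindrifications on $X$-variables slide freely past the ``other'' conjuncts. The main tools are Prop.~\ref{derived0}\ref{derived3} (pulling a factor out of a cylindrification when the variable is outside its domain), Prop.~\ref{derived0}\ref{derived4} (the ``unique witness'' consequence of axiom~\ref{psaxequal0}), Prop.~\ref{derived0}\ref{derived5} (the image of a diagonal under its own cylindrification), and axiom~\ref{psaxdom0}.

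For part \ref{gencydiag1}, since $\dom(u)=Y$ is disjoint from $X$, Prop.~\ref{gency0}\ref{gency3} pulls $u$ out of the cylindrification:
\[
C_X(u \wedge e_\lambda) \;=\; u \wedge C_X(e_\lambda).
\]
So it suffices to prove $u \leq C_X(e_\lambda)$. To compute $C_X(e_\lambda)$, I would apply the individual $c_x$'s ($x\in X$) one at a time. For a given $x\in X$, every remaining factor $d_{x'\lambda(x')}$ with $x'\in X\setminus\{x\}$ has domain $\{x',\lambda(x')\}\subseteq(X\setminus\{x\})\cup Y$, and therefore does not contain $x$ (this is exactly where $X\cap Y=\emptyset$ is used). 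Hence Prop.~\ref{derived0}\ref{derived3} lets $c_x$ act only on $d_{x\lambda(x)}$, which by Prop.~\ref{derived0}\ref{derived5} becomes $d_{\lambda(x)\lambda(x)}$. Iterating (the order does not matter by~\ref{psaxcomm0}) gives $C_X(e_\lambda)=\bigwedge_{x\in X}d_{\lambda(x)\lambda(x)}$. Since $\lambda(X)\subseteq Y=\dom(u)$, axiom~\ref{psaxdom0} yields $u\leq d_{\lambda(x)\lambda(x)}$ for every $x\in X$, i.e.\ $u\leq C_X(e_\lambda)$, as required.

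For part \ref{gencydiag2}, I would induct on $|X|$. The case $X=\emptyset$ is trivial since $e_\lambda=1$ and $C_\emptyset(u)=u$. For the step, pick any $x\in X$ and split $C_X(u)=c_x(C_{X\setminus\{x\}}(u))$ and $e_\lambda = d_{x\lambda(x)} \wedge e_{\lambda|_{X\setminus\{x\}}}$. From $u\leq e_\lambda\leq d_{x\lambda(x)}$ and the fact that each $c_{x'}$ with $x'\in X\setminus\{x\}$ fixes $d_{x\lambda(x)}$ (again because $x'\notin\dom(d_{x\lambda(x)})$, using $X\cap Y=\emptyset$), monotonicity propagates the inequality through to $C_{X\setminus\{x\}}(u)\leq d_{x\lambda(x)}$. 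Since $x\neq\lambda(x)$, Prop.~\ref{derived0}\ref{derived4} then gives $d_{x\lambda(x)}\wedge c_x(C_{X\setminus\{x\}}(u))=C_{X\setminus\{x\}}(u)$. Meeting on the remaining diagonals reduces the claim to $C_{X\setminus\{x\}}(u)\wedge e_{\lambda|_{X\setminus\{x\}}}=u$, which is the inductive hypothesis (applied to the still-domain-disjoint restriction $\lambda|_{X\setminus\{x\}}:X\setminus\{x\}\to Y$ and the same $u$, which still satisfies $u\leq e_{\lambda|_{X\setminus\{x\}}}$).

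The main obstacle is purely bookkeeping: at each application of Prop.~\ref{derived0}\ref{derived3}, \ref{derived4} or \ref{derived5}, one must verify the exact variable conditions ($x\neq\lambda(x)$, $x\notin\dom(\cdot)$, and preservation of $u\leq d_{x\lambda(x)}$ under cylindrifications of outside variables). All of this follows cleanly from the two ingredients $X\cap Y=\emptyset$ and $\lambda(X)\subseteq Y$, so no deep computation is hidden; the challenge is only to sequence the reductions without introducing a variable that would obstruct the next step.
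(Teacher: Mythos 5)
Your proof is correct; I see no gap in either part. For part~\ref{gencydiag2} you take essentially the paper's route: the same induction on $\#X$, peeling off one diagonal factor $d_{x\lambda(x)}$ and cancelling it against $c_x$ via Prop.~\ref{derived0}\ref{derived4}. The only difference is that you apply $c_x$ as the \emph{outermost} cylindrification, which forces the extra step of propagating $u \leq d_{x\lambda(x)}$ through $C_{X\setminus\{x\}}$ (by monotonicity and the fact that each $c_{x'}$ fixes $d_{x\lambda(x)}$), whereas the paper writes $C_{X'}(u)=C_X(c_x(u))$ and can apply Prop.~\ref{derived0}\ref{derived4} to $u$ itself before the outer $C_X$ ever enters. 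For part~\ref{gencydiag1} your decomposition genuinely differs: the paper runs an induction that keeps $u$ inside the cylindrification throughout, while you first separate $u$ entirely via Prop.~\ref{gency0}\ref{gency3} (legitimate, since $X \cap \dom(u)=\emptyset$) and then establish the closed form $C_X(e_{\lambda})=\bigwedge_{x\in X} d_{\lambda(x)\lambda(x)}$, so that the claim reduces to $u\leq d_{yy}$ for $y\in\lambda(X)\subseteq\dom(u)$, i.e.\ to~\ref{psaxdom0}. Both arguments rest on the same lemmas (Prop.~\ref{derived0}\ref{derived3}, Prop.~\ref{derived0}\ref{derived5}, \ref{psaxdom0}); yours has the merit of isolating a reusable identity for $C_X(e_{\lambda})$, but the word ``iterating'' conceals a second induction whose bookkeeping (the partial meets $e_{\lambda|_{X\setminus\{x\}}}$ are nonzero by Prop.~\ref{domains0}\ref{domains4} and have domain avoiding $x$ by Prop.~\ref{domains0}\ref{domains2}) is exactly what the paper's single induction makes explicit. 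All the side conditions you defer to the final paragraph do indeed follow from $X\cap Y=\emptyset$ and $\lambda(X)\subseteq Y$.
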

\begin{proposition}
\label{ocomp0}
Let $\mathbf{V}$ be a projectional semilattice. The family $(\odot_{XY})_{X,Y \in \mathfrak{P}_{\mathrm{fin}}(\mathbf{var})}$ of outer compositions
is the unique family of operations $\odot_{XY}:\mathbf{V}^{*}[Y] \times Y^X \rightarrow \mathbf{V}^{*}[X]$ such that
\begin{enumerate}[label={\bf(PSE\arabic*)},leftmargin=1.5cm]
\item \label{pseaxdis0} $u \odot_{XY} \lambda = C_{Y}(u \wedge e_{\lambda})$ if $\lambda$ is domain-disjoint
\item \label{pseaxsemi0} $u \odot_{XY} (\nu \circ \mu) = (u \odot_{ZY} \nu) \odot_{XZ} \mu$
\end{enumerate}
hold for all $u \in \mathbf{V}[Y]$, $\lambda:X \rightarrow Y$, $\mu:X \rightarrow Z$ and $\nu:Z \rightarrow Y$.
\end{proposition}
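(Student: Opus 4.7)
The plan is to prove existence (the family from (1) satisfies (PSE1) and (PSE2)) and uniqueness separately. (PSE1) is immediate from the first branch of (1). Uniqueness is essentially free: given any family $\odot'$ satisfying (PSE1) and (PSE2), and any $\lambda:X\to Y$, the fixed bijection $\xi=\xi_{XY}:Z_{XY}\to Y$ yields domain-disjoint $\xi$ and $\xi^{-1}\circ\lambda$. Then (PSE2) forces $u\odot'\lambda=(u\odot'\xi)\odot'(\xi^{-1}\circ\lambda)$, and (PSE1) pins down both factors, so $\odot'$ coincides with the $\odot$ defined by (1).

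The main work is verifying (PSE2) for the family defined by (1). The heart of the argument is the \emph{base case}, where $\mu:X\to Z$, $\nu:Z\to Y$, and $\nu\circ\mu$ are \emph{all} domain-disjoint (so $X,Y,Z$ are pairwise disjoint). Here (PSE1) applied twice gives $(u\odot\nu)\odot\mu=C_Z(C_Y(u\wedge e_\nu)\wedge e_\mu)$. Since $\dom(e_\mu)=X\cup Z$ is disjoint from $Y$, Prop.~\ref{gency0}\ref{gency3} pulls $C_Y$ outside, yielding $C_{Y\cup Z}(u\wedge e_\nu\wedge e_\mu)$ via Prop.~\ref{gency0}\ref{gency2}. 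By Prop.~\ref{eqcomp0}\ref{eqcomp2} we rewrite $e_\nu\wedge e_\mu=e_{\nu\circ\mu}\wedge e_\nu$, giving $C_{Y\cup Z}(u\wedge e_{\nu\circ\mu}\wedge e_\nu)$. Since $Z$ is disjoint from $\dom(u)=Y$ and from $\dom(e_{\nu\circ\mu})\subseteq X\cup Y$, Prop.~\ref{gency0}\ref{gency3} pulls $C_Z$ inward to give $C_Y(u\wedge e_{\nu\circ\mu}\wedge C_Z(e_\nu))$. Applying Prop.~\ref{derived0}\ref{derived5} together with Prop.~\ref{gency0}\ref{gency3} coordinate-by-coordinate yields $C_Z(e_\nu)=\bigwedge_{z\in Z}d_{\nu(z)\nu(z)}$; and because $u\in\mathbf{V}^{*}[Y]$ and $\nu(z)\in Y$, axiom~\ref{psaxdom0} forces $u\leq d_{\nu(z)\nu(z)}$, so $u\wedge C_Z(e_\nu)=u$. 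What remains is $C_Y(u\wedge e_{\nu\circ\mu})=u\odot(\nu\circ\mu)$ by (PSE1), finishing the base case.

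The remaining cases of (PSE2) — where at least one of $\mu,\nu,\nu\circ\mu$ fails to be domain-disjoint — reduce to the base case by inserting the fresh bijections $\xi_{*,*}$ from (1) on each non-domain-disjoint factor, rewriting each such factor as a composition of two domain-disjoint functions, and repeatedly invoking the base-case identity together with (PSE1). The same manipulation, applied to two different choices of fresh bijection, shows that (1) is independent of the choice of $\xi_{XY}$, so $\odot$ is genuinely well-defined.

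The main obstacle is bookkeeping in this case analysis: each reduction produces nested expressions of the form $C_{\cdot}(\cdots\wedge e_{\cdot}\wedge\cdots)$, and at each step one must verify the domain-disjointness hypotheses of Prop.~\ref{gency0}\ref{gency3} and the appropriate instance of Prop.~\ref{eqcomp0} before applying them. Once the base case is isolated as above, the remaining cases are mechanical, but proving them cleanly without drowning in notation is the main labour of the argument.
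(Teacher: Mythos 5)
Your proposal is correct and follows essentially the same architecture as the paper's proof: the uniqueness argument via the forced factorization $\lambda=\xi_{XY}\circ(\xi_{XY}^{-1}\circ\lambda)$ is identical, your ``base case'' is exactly the paper's Subcase~1.1 (pairwise disjoint $X,Y,Z$), and the reduction of the remaining cases via fresh intermediate bijections matches the paper's Subcase~1.2 and Case~2. Your base-case computation is a valid local variant: the paper works left-to-right by inserting $u=C_Z(u\wedge e_{\nu})$ via Prop.~\ref{gencydiag0}\ref{gencydiag1}, whereas you work right-to-left and instead evaluate $C_Z(e_{\nu})=\bigwedge_{z\in Z}d_{\nu(z)\nu(z)}$ and absorb it into $u$ using~\ref{psaxdom0}; both routes pass through Prop.~\ref{eqcomp0}\ref{eqcomp2} and Prop.~\ref{gency0}\ref{gency3} in the same way. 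The one place where ``mechanical bookkeeping'' undersells the work is the case where $\nu$ and $\mu$ are already domain-disjoint but $\nu\circ\mu$ is not: there the task is to reconcile two \emph{different} domain-disjoint factorizations of $\lambda$ (the definitional one through $Z_{XY}$ and the given one through $Z$), whose intermediate sets may overlap, so the paper must interpolate through a third, freshly chosen intermediate set before the base case applies; you should also record explicitly that $u\odot\nu\in\mathbf{V}^{*}[Z]$ (via Props.~\ref{domains0} and~\ref{gencydiag0}\ref{gencydiag1}), since your base-case computation silently uses this to expand $(u\odot\nu)\odot\mu$ by the first branch of the definition.
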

\begin{proposition}
\label{proj0}
Let $u \in \mathbf{V}^{*}[Y]$.
\begin{enumerate}[label=\roman*),leftmargin=1cm]
\item \label{proj1} $u \odot \sigma \odot \sigma^{-1} = u$ for all domain-disjoint bijections $\sigma:X \rightarrow Y$
\item \label{proj2} $u \odot \iota_X = C_{Y\setminus X}(u)$ for all inclusions $\iota_X:X \rightarrow Y$
\item \label{proj3} $u \odot \delta = u \wedge e_{\delta}$ for all foldings $\delta:X \rightarrow Y$
\end{enumerate}
\end{proposition}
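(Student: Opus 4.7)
The common strategy is to reduce each outer composition to the domain-disjoint case~\ref{pseaxdis0} via a fresh auxiliary bijection. Fix a finite $Z \subseteq \mathbf{var}$ disjoint from $X \cup Y$ together with a bijection $\tau : Z \to Y$; then $\tau$ is domain-disjoint, and~\ref{psaxsym0} gives $e_{\tau^{-1}} = e_\tau$. Set $v := u \odot \tau = C_Y(u \wedge e_\tau) \in \mathbf{V}^{*}[Z]$ by~\ref{pseaxdis0}. For parts~(ii) and~(iii), the factorization $\lambda = \tau \circ \mu_\lambda$ with $\mu_\lambda := \tau^{-1} \circ \lambda$ has $\mu_\lambda : X \to Z$ domain-disjoint (since $X \cap Z = \emptyset$), so by~\ref{pseaxsemi0} and~\ref{pseaxdis0} one obtains $u \odot \lambda = C_Z(v \wedge e_{\mu_\lambda})$.

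For~(i), the given $\sigma : X \to Y$ is itself domain-disjoint, so applying~\ref{pseaxdis0} twice directly yields $u \odot \sigma \odot \sigma^{-1} = C_X(C_Y(u \wedge e_\sigma) \wedge e_\sigma)$. I would contract the inner $C_Y(u \wedge e_\sigma) \wedge e_\sigma$ to $u \wedge e_\sigma$ by Prop.~\ref{gencydiag0}(\ref{gencydiag2}) applied with $\lambda := \sigma^{-1}$ and $u' := u \wedge e_\sigma \leq e_\sigma$, and then peel off the outer $C_X$ by Prop.~\ref{gencydiag0}(\ref{gencydiag1}). Part~(i) also supplies the useful identity $u = C_Z(v \wedge e_\tau)$ for use in the remaining parts.

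For~(ii), partition $Z = Z_X \cup (Z \setminus Z_X)$ via $Z_X := \tau^{-1}(X)$ and split $\tau$ into bijections $\alpha : Z_X \to X$ and $\beta : Z \setminus Z_X \to Y \setminus X$, so $e_\tau = e_\alpha \wedge e_\beta$. By~\ref{psaxsym0}, $e_{\mu_{\iota_X}} = e_\alpha$, hence $u \odot \iota_X = C_Z(v \wedge e_\alpha)$. Starting from the identity $u = C_Z(v \wedge e_\alpha \wedge e_\beta)$ of part~(i) and commuting cylindrifications by Prop.~\ref{gency0}(\ref{gency2}), the target becomes $C_{Y\setminus X}(u) = C_Z(C_{Y\setminus X}(v \wedge e_\alpha \wedge e_\beta))$; Prop.~\ref{gency0}(\ref{gency3}) then pulls $v$ and $e_\alpha$ out of the inner cylindrification (their domains avoid $Y \setminus X$), reducing everything to computing $C_{Y\setminus X}(e_\beta)$. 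An auxiliary identity $C_B(e_\gamma) = e_{\mathrm{id}_A}$ for any domain-disjoint bijection $\gamma : A \to B$ -- obtained by combining Prop.~\ref{eqcomp0}(\ref{eqcomp3}) (with $\mathrm{id}_A$ as folding, giving $e_{\mathrm{id}_A} \wedge e_\gamma = e_\gamma$) with Prop.~\ref{gencydiag0}(\ref{gencydiag1}) applied to $u := e_{\mathrm{id}_A}$ -- yields $C_{Y\setminus X}(e_\beta) = e_{\mathrm{id}_{Z \setminus Z_X}}$, which is absorbed by $v$ via~\ref{psaxdom0}. This closes~(ii).

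For~(iii), write $\mu := \mu_\delta = \tau^{-1} \circ \delta : X \to Z$. Since $\delta|_Y = \mathrm{id}_Y$, the restriction $\mu|_Y$ coincides with $\tau^{-1}$, so $e_{\mu|_Y} = e_\tau$ by~\ref{psaxsym0}; hence $e_\mu \leq e_\tau$ and $v \wedge e_\mu = v \wedge e_\tau \wedge e_\mu$. Prop.~\ref{eqcomp0}(\ref{eqcomp2}) with $\mu$ and $\tau$ rewrites $e_\mu \wedge e_\tau = e_{\tau \circ \mu} \wedge e_\tau = e_\delta \wedge e_\tau$, so $v \wedge e_\mu = v \wedge e_\tau \wedge e_\delta$. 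Since $\dom(e_\delta) = X$ is disjoint from $Z$, Prop.~\ref{gency0}(\ref{gency3}) together with part~(i) finally gives $C_Z(v \wedge e_\tau \wedge e_\delta) = C_Z(v \wedge e_\tau) \wedge e_\delta = u \wedge e_\delta$. The main obstacle throughout is the careful domain bookkeeping when splitting $e_\tau$ into its $X$- and $(Y\setminus X)$-parts in~(ii) and recognizing $e_{\mu|_Y} = e_\tau$ in~(iii); once the auxiliary identity $C_B(e_\gamma) = e_{\mathrm{id}_A}$ is isolated and one spots Prop.~\ref{eqcomp0}(\ref{eqcomp2}) as the tool for swapping $e_\mu$ with $e_\delta$ modulo $e_\tau$, each collapse is a single application of Prop.~\ref{gency0}(\ref{gency3}).
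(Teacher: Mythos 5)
Your argument is correct, and parts of it coincide with the paper while part~\ref{proj2} takes a genuinely different route. Part~\ref{proj1} is essentially the paper's proof verbatim: two applications of~\ref{pseaxdis0}, contraction of the inner term by Prop.~\ref{gencydiag0}\ref{gencydiag2} (using $e_{\sigma}=e_{\sigma^{-1}}$ from~\ref{psaxsym0}), then Prop.~\ref{gencydiag0}\ref{gencydiag1}. Part~\ref{proj3} differs only cosmetically: the paper simplifies $e_{\sigma^{-1}\circ\delta}$ in one stroke via Prop.~\ref{eqcomp0}\ref{eqcomp3} (with the folding $\delta$ as the inner factor), whereas you reach $v\wedge e_{\mu}=v\wedge e_{\tau}\wedge e_{\delta}$ through the detour $e_{\mu}\leq e_{\mu|_Y}=e_{\tau}$ followed by Prop.~\ref{eqcomp0}\ref{eqcomp2}; both proofs then finish identically with Prop.~\ref{gency0}\ref{gency3} and part~\ref{proj1}. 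For part~\ref{proj2} the paper composes on the \emph{right} with a fresh bijection $\sigma:Z\rightarrow X$, observes $e_{\iota_X\circ\sigma}=e_{\sigma}$, splits $C_Y=C_XC_{Y\setminus X}$, pulls $e_{\sigma}$ out of $C_{Y\setminus X}$ by Prop.~\ref{gency0}\ref{gency3}, reads off $C_{Y\setminus X}(u)\odot\sigma$ via~\ref{pseaxdis0}, and cancels $\sigma$ using~\ref{proj1}; no surgery on generalized diagonals is needed. You instead factor on the \emph{left} through $\tau:Z\rightarrow Y$, which forces the decomposition $e_{\tau}=e_{\alpha}\wedge e_{\beta}$ and the auxiliary identity $C_B(e_{\gamma})=e_{\mathrm{id}_A}$ for domain-disjoint bijections $\gamma:A\rightarrow B$ (your derivation of it from Prop.~\ref{eqcomp0}\ref{eqcomp3}, \ref{psaxsym0} and Prop.~\ref{gencydiag0}\ref{gencydiag1} is correct), before absorbing $e_{\mathrm{id}_{Z\setminus Z_X}}$ into $v$ via~\ref{psaxdom0}. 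Your route is longer but uniform --- all three parts run through the single reduction $u\odot\lambda=C_Z(v\wedge e_{\tau^{-1}\circ\lambda})$, and the identity $C_B(e_{\gamma})=e_{\mathrm{id}_A}$ is a reusable fact --- while the paper's right-cancellation trick is shorter. One step you should make explicit: pulling $v\wedge e_{\alpha}$ out of $C_{Y\setminus X}$ via Prop.~\ref{gency0}\ref{gency3} needs $v\wedge e_{\alpha}\neq 0$, since otherwise its domain is $\mathbf{var}$ by~\ref{psaxdom0} and the disjointness hypothesis fails; this follows because $C_Z(v\wedge e_{\alpha})=u\odot\iota_X\in\mathbf{V}^{*}[X]$ by Prop.~\ref{ocomp0} together with Prop.~\ref{gency0}\ref{gency1}.
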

\section{Representation Theorem}
\label{rep0}
In order to use a result from another paper~\cite{KS22a}, we have to introduce a variant of the outer composition.
We consider a \emph{finite partial transformation} of $\mathbf{var}$ to be a finite relation $\lambda \subseteq \mathbf{var} \times \mathbf{var}$
that is \emph{functional}, i.e. if $(x,y) \in \lambda$ and $(x,y') \in \lambda$ then $y = y'$. The set of all finite partial transformations
of $\mathbf{var}$ is denoted by $\mathcal{T}_{\mathrm{fp}}(\mathbf{var})$. The pair $(\mathcal{T}_{\mathrm{fp}}(\mathbf{var}),\circ)$ is
a semigroup with the usual composition of relations. For given $\lambda \in \mathcal{T}_{\mathrm{fp}(\mathbf{var})}$
and $Y \in \mathfrak{P}_{\mathrm{fin}}(\mathbf{var})$, the function $\lambda\dbar^Y:\lambda^{-1}(Y) \rightarrow Y$ is given
by $\lambda\dbar^Y(x):=\lambda(x)$. The equation $(\lambda \circ \mu)\dbar^Y = \lambda\dbar^Y \circ \mu\dbar^{\lambda^{-1}(Y)}$
relates composition on $\mathcal{T}_{\mathrm{fp}}(\mathbf{var})$ with function composition.

We now combine the family $(\odot_{XY})_{X,Y \in \mathfrak{P}_{\mathrm{fin}}(\mathbf{var})}$
into a single, total operation $\cdot: V \times \mathcal{T}_{\mathrm{fp}}(\mathbf{var}) \rightarrow V$, defined by
$u \cdot \lambda := u \odot \lambda\dbar^Y$ for all $u \in \mathbf{V}^{*}[Y]$, and furthermore $0 \cdot \lambda := 0$.
If $u \in \mathbf{V}^{*}[Y]$, then $u \cdot \lambda \in \mathbf{V}^{*}[\lambda^{-1}(Y)]$ by Prop.~\ref{ocomp0}.
Also, we obtain $u \cdot (\lambda \circ \mu) = u \odot (\lambda \circ \mu)\dbar^{Y}
= u \odot (\lambda\dbar^{Y} \circ \mu\dbar^{\lambda^{-1}(Y)}) = u \odot \lambda\dbar^{Y} \odot \mu\dbar^{\lambda^{-1}(Y)} = u \cdot \lambda \cdot \mu$
using the above equation and~\ref{pseaxsemi0}. Algebraically, the latter property means that
$\cdot:V \times \mathcal{T}_{\mathrm{fp}}(\mathbf{var}) \rightarrow V$ is a \emph{semigroup action}.
The relations $\pi_X \in \mathcal{T}_{\mathrm{fp}}(\mathbf{var})$, given by $\pi_X:=\{(x,x) \mid x \in X\}$,
are called \emph{local identities}; for all $u \in \mathbf{V}^{*}[Y]$, we obtain $u \cdot \pi_Y = u \odot \iota_Y = u$
from Prop.~\ref{proj0}\ref{proj2}. The semigroup $(\mathcal{T}_{\mathrm{fp}}(\mathbf{var}),\circ)$ naturally extends to a monoid
$(\mathcal{T}_{\mathrm{fp}}(\mathbf{var}) \cup \{\pi_{\mathbf{var}}\},\circ,\pi_{\mathbf{var}})$,
i.e. $\pi_{\mathbf{var}}$ is the \emph{global identity}, and likewise we have $u \cdot \pi_{\mathbf{var}} = u \odot \iota_Y = u$
for $u \in \mathbf{V}^{*}[Y]$; this means that the semigroup action naturally extends to a \emph{monoid action}
$\cdot:V \times (\mathcal{T}_{\mathrm{fp}}(\mathbf{var}) \cup \{\pi_{\mathbf{var}}\}) \rightarrow V$.
\begin{definition}
Let $\mathbf{V}=(V,\wedge,0,1,c_x,d_{xy},\mathrm{dom})_{x,y \in \mathbf{var}}$ be a projectional semilattice.
The \emph{orbital extension} of $\mathbf{V}$ is the algebra $(V,\wedge,0,1,\cdot,c_x,d_{xy},\mathrm{dom})_{x,y \in \mathbf{var}}$.
\end{definition}
Orbital semilattices have been introduced in~\cite{KS22a}. The main result of that paper states that orbital semilattices
are isomorphic to orbital table algebras (which use the semigroup action instead of deletion). Using the same isomorphism,
we can show that projectional semilattices are isomorphic to conjunctive table algebras with equality (Thm.~\ref{iso0}).
All that remains to do is verify the orbital semilattice axioms for the orbital extension. These axioms are printed below,
as they appear in~\cite{KS22a}; w.r.t. axiom~\ref{axiomdxy0}, we note that $\tfrac{xx}{xy}$ is a shorthand
for the partial finite transformation $\delta = \{(x,x),(y,x)\}$ (i.e. $\delta(x)=\delta(y)=x$).
\begin{theorem}
\label{orbital0}
The orbital extension of a projectional semilattice $\mathbf{V}$ satisfies the orbital semilattice axioms, i.e.
\begin{enumerate}[label=\bf(A\arabic*),leftmargin=1.5cm]
\item \label{axiomone0} $u \neq 0 \Rightarrow u \cdot \pi_{\emptyset} = 1$
\item \label{axiomzero0} $0 \cdot \lambda = 0$
\item \label{axiomdist0} $\dom(u) \subseteq Z \Rightarrow (u \wedge v) \cdot \pi_Z = u \wedge (v \cdot \pi_Z)$
\item \label{axiomproj0} $u \leq u \cdot \pi_Z$
\item \label{axiommult0} $u \leq v \Rightarrow u \cdot \lambda \leq v \cdot \lambda$
\item \label{axiomequal0} $(u \leq d_{xy} \text{ and } u \neq 0 \text{ and } x \neq y) \Rightarrow u = (u \cdot \pi_{\dom(u)\setminus\{y\}}) \wedge d_{xy}$
\item \label{axiomsemi0} $u \cdot \lambda \cdot \mu = u \cdot (\lambda \circ \mu)$
\item \label{axiomneut0} $u \cdot \pi_{\dom(u)} = u$
\item \label{axiomdxx0} $d_{xx} \neq 0$
\item \label{axiomdxy0} $d_{xy} = d_{xx} \cdot \tfrac{xx}{xy}$
\item \label{axiompreimg0} $u \neq 0 \Rightarrow \dom(u \cdot \lambda) = \lambda^{-1}(\dom(u))$
\item \label{axiomfinite0} $u \neq 0 \Rightarrow \dom(u) \text{ is finite}$
\item \label{axiomdom0} $\dom(u) = \{u \in \mathbf{var} \mid u \leq d_{xx}\}$
\end{enumerate}
for all $u,v \in V$, $\lambda,\mu \in \mathcal{T}_{\mathrm{fp}}(\mathbf{var})$, $x,y \in \mathbf{var}$
and $Z \in \mathfrak{P}_{\mathrm{fin}}(\mathbf{var})$.
\end{theorem}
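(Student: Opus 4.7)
My plan is to verify \ref{axiomone0}--\ref{axiomdom0} in turn. Five axioms need essentially no work: \ref{axiomzero0} holds by definition of the extended action, \ref{axiomdxx0}, \ref{axiomfinite0}, \ref{axiomdom0} are verbatim copies of \ref{psaxdxx0}, \ref{psaxfinite0}, \ref{psaxdom0}, \ref{axiompreimg0} is immediate from the typing $\odot_{XY}:\mathbf{V}^{*}[Y] \times Y^X \to \mathbf{V}^{*}[X]$ supplied by Prop.~\ref{ocomp0}, and \ref{axiomsemi0} is the semigroup-action identity already derived in the paragraph preceding the theorem.

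The main engine for the remaining local-identity axioms is the closed form $u \cdot \pi_Z = C_{\dom(u) \setminus Z}(u)$ for $u \neq 0$, which comes from Prop.~\ref{proj0}\ref{proj2} upon observing that $\pi_Z\dbar^{\dom(u)}$ is the natural inclusion of $Z \cap \dom(u)$ into $\dom(u)$. From this: \ref{axiomneut0} is $C_\emptyset(u) = u$; \ref{axiomproj0} is $u \leq C_{\dom(u) \setminus Z}(u)$ from iterating \ref{psaxproj0}; and \ref{axiomone0} follows because $u \cdot \pi_\emptyset = C_{\dom(u)}(u)$ has empty domain by Prop.~\ref{domains0}\ref{domains3} and is nonzero (iterated \ref{psaxproj0}), hence equals $1$ by \ref{psaxone0}. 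For \ref{axiomdist0}, the closed form together with $\dom(u) \subseteq Z$ yields $(u \wedge v) \cdot \pi_Z = C_{\dom(v) \setminus Z}(u \wedge v)$, and Prop.~\ref{gency0}\ref{gency3} pulls $u$ outside the cylindrification. Axiom \ref{axiomequal0} reduces via the same closed form to the identity $u = c_y(u) \wedge d_{xy}$: one inclusion is \ref{psaxequal0} rewritten via \ref{psaxsym0} using $u \wedge d_{xy} = u$, the other combines \ref{psaxproj0} with $u \leq d_{xy}$. Axiom \ref{axiomdxy0} falls out of Prop.~\ref{proj0}\ref{proj3}, since $\tfrac{xx}{xy}\dbar^{\{x\}}: \{x,y\} \to \{x\}$ is a folding, so $d_{xx} \cdot \tfrac{xx}{xy} = d_{xx} \wedge d_{xx} \wedge d_{yx} = d_{xy}$ after applying \ref{psaxsym0} and $d_{xy} \leq d_{xx}$ (from Prop.~\ref{domains0}\ref{domains1} and \ref{psaxdom0}).

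The main obstacle will be monotonicity \ref{axiommult0}. The difficulty is that $u \leq v$ with $u, v \neq 0$ forces $\dom(v) \subseteq \dom(u)$ by \ref{psaxdom0}, so $u \cdot \lambda$ and $v \cdot \lambda$ land in different slices $\mathbf{V}^{*}[\lambda^{-1}(\dom(u))]$ and $\mathbf{V}^{*}[\lambda^{-1}(\dom(v))]$, blocking a direct appeal to same-slice monotonicity of $\odot$. My plan is a two-stage reduction. First, because the image of $\lambda\dbar^{\dom(u)}$ restricted to $\lambda^{-1}(\dom(v))$ lies inside $\dom(v)$, it factors as $\iota_{\dom(v)}^{\dom(u)} \circ \lambda\dbar^{\dom(v)}$; applying \ref{pseaxsemi0} and Prop.~\ref{proj0}\ref{proj2} gives the key identity $(u \cdot \lambda) \cdot \pi_{\lambda^{-1}(\dom(v))} = C_{\dom(u) \setminus \dom(v)}(u) \cdot \lambda$. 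By \ref{axiomproj0} the left side dominates $u \cdot \lambda$, and by Prop.~\ref{gency0}\ref{gency3} applied to the identity $u = u \wedge v$, the element $u' := C_{\dom(u) \setminus \dom(v)}(u)$ satisfies $\dom(u') = \dom(v)$ and $u' \leq v$. So it suffices to prove monotonicity of $\odot$ when both inputs share the common domain $\dom(v)$; this in turn reduces, via the Decomposition Lemma together with definition~\eqref{outercomp0}, to the domain-disjoint case $u \odot \mu = C_Y(u \wedge e_\mu)$, where monotonicity in $u$ is immediate from Prop.~\ref{derived0}\ref{derived2} and the semilattice structure.
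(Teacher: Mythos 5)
Your proposal is correct and follows essentially the same route as the paper: the closed form $u \cdot \pi_Z = C_{\dom(u)\setminus Z}(u)$ from Prop.~\ref{proj0}\ref{proj2} drives \ref{axiomone0}, \ref{axiomdist0}, \ref{axiomproj0}, \ref{axiomequal0} and \ref{axiomneut0}, the folding computation via Prop.~\ref{proj0}\ref{proj3} gives \ref{axiomdxy0}, and \ref{axiommult0} is handled by the same reduction $u\cdot\lambda \leq C_{\dom(u)\setminus\dom(v)}(u)\cdot\lambda$ with $C_{\dom(u)\setminus\dom(v)}(u) \leq v$ in the slice $\mathbf{V}^{*}[\dom(v)]$. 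The only (welcome) difference is that you make explicit the last step the paper leaves implicit, namely that same-slice monotonicity of $\odot$ reduces via~\eqref{outercomp0} to the domain-disjoint case $C_Y(u \wedge e_{\lambda})$, where it follows from Prop.~\ref{derived0}\ref{derived2}.
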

\begin{proof}
Axioms~\ref{axiompreimg0}, \ref{axiomsemi0} and~\ref{axiomneut0} have already been discussed at the beginning of Sect.~\ref{rep0}.
Axiom~\ref{axiomzero0} holds by definition of "$\cdot$". Axioms~\ref{axiomdxx0}, \ref{axiomfinite0} and~\ref{axiomdom0} are projectional
semilattice axioms.
\ref{axiomone0}: We have $u \cdot \pi_{\emptyset} \in \mathbf{V}^{*}[\emptyset]$ by~\ref{axiompreimg0},
so $u \cdot \pi_{\emptyset} = 1$ by~\ref{psaxone0}.
\ref{axiomdist0}: For $u=0$ or $v=0$ by~\ref{axiomzero0}, so let $u \in \mathbf{V}^{*}[X]$ and $v \in \mathbf{V}^{*}[Y]$.
Then $(u \wedge v) \cdot \pi_Z = C_{(X \cup Y)\setminus Z}(u \wedge v)$ by~\ref{axiomzero0}
if $u \wedge v = 0$, and otherwise by Prop.~\ref{domains0}\ref{domains2} and Prop.~\ref{proj0}\ref{proj2} (where WLOG $Z \subseteq X \cup Y$).
By assumption $X \subseteq Z$, so $((X \cup Y)\setminus Z) \cap X = \emptyset$ in Prop.~\ref{gency0}\ref{gency3},
and thus $C_{(X \cup Y) \setminus Z}(u \wedge v) = u \wedge C_{(X \cup Y)\setminus Z}(v)$.
Lastly, $C_{(X \cup Y)\setminus Z}(v) = C_{Y\setminus Z}(C_{(X\setminus Y)\setminus Z}(v)) = C_{Y\setminus Z}(v) = v \cdot \pi_{Z}$
by Prop.~\ref{psaxdom0} (showing $C_{(X\setminus Y)\setminus Z}(v) = v$) and Prop.~\ref{proj0}\ref{proj2},
so altogether $(u \wedge v) \cdot \pi_Z = u \wedge (v \cdot \pi_Z)$.
\ref{axiomproj0}: We have $u \leq C_{X\setminus Z}(u) = u \cdot \pi_Z$ by Prop.~\ref{derived0}\ref{derived2} and Prop.~\ref{proj0}\ref{proj2}.
\ref{axiommult0}: For $u=0$ or $v=0$ by~\ref{axiomzero0}, so let $u \in \mathbf{V}^{*}[X]$ and $v \in \mathbf{V}^{*}[Y]$.
First, note that $u \cdot \lambda \leq u \cdot \lambda \cdot \pi_{\lambda^{-1}(Y)} = u \cdot \pi_{Y} \cdot \lambda = C_{X\setminus Y}(u) \cdot \lambda$
by~\ref{axiomproj0}, \ref{axiomsemi0} and Prop.~\ref{proj0}\ref{proj2}. Moreover,
$C_{X\setminus Y}(u) \leq C_{X\setminus Y}(v) = v$ by Prop.~\ref{derived0}\ref{derived2} and~\ref{psaxdom0}.
From $u \leq v$ follows $Y \subseteq X$, so $C_{X\setminus Y}(u) \in \mathbf{V}^{*}[Y]$ by Prop.~\ref{domains0}\ref{domains3},
and thus $C_{X\setminus Y}(u) \cdot \lambda = C_{X\setminus Y}(u) \odot \lambda\dbar^{Y} \leq v \odot \lambda\dbar^{Y} = v \cdot \lambda$

$C_{X\setminus Y}(u) \leq C_{X\setminus Y}(v) = v$, then 
$u \cdot \lambda \leq u \cdot \lambda \cdot \pi_{\lambda^{-1}(Y)} = u \cdot \pi_{Y} \cdot \lambda
= C_{X\setminus Y}(u) \cdot \lambda = C_{X\setminus Y}(u) \odot \lambda\dbar^Y \leq v \odot \lambda\dbar^Y = v \cdot \lambda$
\ref{axiomequal0}: $(u \cdot \pi_{X\setminus\{y\}}) \wedge d_{xy} = c_y(u) \wedge d_{xy} = u$
\ref{axiomdxy0}: $d_{xx} \cdot \tfrac{xx}{xy} = d_{xx} \odot \tfrac{xx}{xy}\dbar^{\{x\}} = d_{xx} \wedge (d_{xx} \wedge d_{xy}) = d_{xy}$
by Prop.~\ref{proj0}\ref{proj3} and Prop.~\ref{domains0}\ref{domains1}.
\qed
\end{proof}
\begin{theorem}
\label{iso0}
Every projectional semilattice is isomorphic to a conjunctive table algebra (with equality).
\end{theorem}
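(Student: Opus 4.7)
The plan is to leverage the representation theorem for orbital semilattices from \cite{KS22a} via the orbital extension construction. First, given a projectional semilattice $\mathbf{V}$, I would form its orbital extension $\mathbf{V}^+$, augmenting the signature with the total semigroup action $\cdot$. By Theorem~\ref{orbital0}, which has just been established, $\mathbf{V}^+$ satisfies all the orbital semilattice axioms, and is therefore an orbital semilattice.

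Next, I would invoke the main theorem of \cite{KS22a}: every orbital semilattice is isomorphic to an orbital table algebra. This produces an isomorphism $\varphi : \mathbf{V}^+ \to \mathbf{T}^+$, where $\mathbf{T}^+$ is an orbital table algebra over some base set $G$. Since $\varphi$ respects every operation in the orbital signature, it respects in particular the operations $\wedge$, $0$, $1$, $c_x$, $d_{xy}$, and $\mathrm{dom}$ that make up the projectional signature.

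The final step is a reduct argument. Forgetting the semigroup action $\cdot$ turns $\mathbf{T}^+$ into an algebra $\mathbf{T}$ in the projectional signature, and one checks that $\mathbf{T}$ is a subalgebra of $\mathbf{Tab}(G)$: the interpretation of $c_x$ on $\mathbf{T}^+$ coincides with the table-deletion $\mathrm{del}_x$, as can be seen from $c_x(T) = T \circ \iota_{Y \setminus \{x\}} = \mathrm{del}_x(T)$ (using Prop.~\ref{proj0}\ref{proj2}) when $x \in Y = \mathrm{schema}(T)$, and from axioms~\ref{psaxdim0} and~\ref{psaxdom0} (which jointly force $c_x(T) = T$) when $x \notin Y$. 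Hence $\mathbf{T}$ is a conjunctive table algebra with equality over $G$, and the restriction of $\varphi$ to the projectional signature gives the desired isomorphism $\mathbf{V} \to \mathbf{T}$.

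The main obstacle is interfacing correctly with \cite{KS22a}: one needs to pin down exactly what orbital table algebras look like in that paper and verify that the representation isomorphism preserves $c_x$ in the sense required. Once that interface is clarified, essentially nothing remains to compute; the substantive work has already been done in Theorem~\ref{orbital0}, and what remains is a straightforward signature reduction.
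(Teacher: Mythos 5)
Your proposal is correct and follows essentially the same route as the paper: pass to the orbital extension, apply Theorem~\ref{orbital0}, invoke the representation theorem of \cite{KS22a}, and then recover preservation of $c_x$ from the identity $c_x(u) = u \cdot \pi_{\dom(u)\setminus\{x\}}$ supplied by Prop.~\ref{proj0}\ref{proj2}. The only slip is listing $c_x$ among the operations automatically preserved by the orbital isomorphism (it is not in the orbital signature), but your final step correctly supplies the missing argument, exactly as the paper does.
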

\begin{proof}
Let $(V,\wedge,0,1,c_x,d_{xy},\dom)_{x,y \in \mathbf{var}}$ be a projectional semilattice. By Thm.~\ref{orbital0}, the reduct
$(V,\wedge,0,1,\cdot,d_{xy},\dom)_{x,y \in \mathbf{var}}$ of the orbital extension is an orbital semilattice. As shown in~\cite{KS22a},
every orbital semilattice is isomorphic to an orbital table algebra
$(A,\Join,\emptyset,\{\langle\rangle\},\circ,E_{xy},\mathrm{schema})_{x,y \in \mathbf{var}}$, i.e. there is a bijection
$\varphi:V \rightarrow A$ which preserves all orbital semilattice operations and constants.
In addition, since $c_x u = u \cdot \pi_{\dom(u)\setminus\{x\}}$ by Prop.~\ref{proj0}, we obtain
$\varphi(c_x u) = \varphi(u \cdot \pi_{\dom(u)\setminus\{x\}}) = \varphi(u) \circ \pi_{\mathrm{schema}(\varphi(u))\setminus\{x\}} = \del_x(\varphi(u))$.
So $\varphi$ preserves all projectional semilattice operations and constants. This shows that $(V,\wedge,0,1,c_x,d_{xy},\dom)$ is isomorphic
to the conjunctive table algebra $(A,\Join,\emptyset,\{\langle\rangle\},\circ,E_{xy},\mathrm{schema})_{x,y \in \mathbf{var}}$.
\qed
\end{proof}



\bibliographystyle{splncs04}
\bibliography{cta}

\appendix
\section{Proofs for Sect.~\ref{pslat0}}
\label{proofs0}
\begin{proof}[Proof of Prop.~\ref{derived0}]
\ref{derived1} We obtain $c_x(c_x(v)) = c_x(c_x(v) \wedge c_x(v)) = c_x(c_x(v)) \wedge c_x(v) \leq c_x(v)$ using~\ref{psaxdist0},
and $c_x(v) \leq c_x(c_x(v))$ from~\ref{psaxproj0}.
\ref{derived2} If $u \leq v$, then $u \leq v \leq c_x(v)$ by~\ref{psaxproj0}, so $u = u \wedge c_x(v)$, and thus
$c_x(u) = c_x(u \wedge c_x(v)) = c_x(u) \wedge c_x(v) \leq c_x(v)$ by~\ref{psaxdist0}.
\ref{derived3}
If $x \not\in \dom(v)$, then $v \not\leq d_{xx}$ by~\ref{psaxdom0}, so $v \neq 0$, moreover $v=c_x(v)$ by~\ref{psaxdim0},
and thus $c_x(u \wedge v) = c_x(u) \wedge v$ by~\ref{psaxdist0}.
\ref{derived4}
Assume $x \neq y$ and $u \leq d_{xy}$. Then $u = d_{xy} \wedge u$, and thus $d_{xy} \wedge c_x(u) = d_{xy} \wedge c_x(d_{xy} \wedge u) \leq u$
by~\ref{psaxequal0}. Conversely, $u \leq c_x(u)$ by~\ref{psaxproj0}, so $u \leq d_{xy} \wedge c_x(u)$.
\ref{derived5}
If $x \neq y$, then $c_x(d_{xy}) = c_x(d_{yx} \wedge d_{xy}) = d_{yy}$ by~\ref{psaxsym0} and~\ref{psaxdxyz0}.
\ref{derived6}
If $z \neq x,y$, then $d_{xz} \wedge d_{zy} \leq c_z(d_{xz} \wedge d_{zy}) = d_{xy}$ by~\ref{psaxproj0} and~\ref{psaxdxyz0}.
Otherwise, if $z=x$, then $d_{xz} \wedge d_{zy} \leq d_{zy} = d_{xy}$, and likewise if $z=y$.
\qed
\end{proof}
\begin{proof}[Proof of Prop.~\ref{domains0}]
\ref{domains1}
First, we show $x,y \in \dom(d_{xy})$. For $x=y$, trivially $d_{xx} \leq d_{xx}$, so $x \in \dom(d_{xx})$ by~\ref{psaxdom0}.
For $x \neq y$, we obtain $d_{xy} \leq c_x(d_{xy}) = d_{yy}$ from~\ref{psaxproj0} and Prop.~\ref{derived0}\ref{derived5},
and symmetrically $d_{xy} = d_{yx} \leq d_{xx}$ using~\ref{psaxsym0}, which means $x,y \in \dom(d_{xy})$.
Next, we show $d_{xy} \neq 0$. For $x=y$ see~\ref{psaxdxx0}. For $x \neq y$, we have $c_x(d_{xy})\neq 0$ by Prop.~\ref{derived0}\ref{derived5}
and~\ref{psaxdxx0}, so $d_{xy} \neq 0$ by~\ref{psaxzero0}. Finally, we show $z \not\in \dom(d_{xy})$ for all $z \in \mathbf{var}\setminus\{x,y\}$.
We have $d_{xy} = c_z(d_{xz} \wedge d_{zy})$ by~\ref{psaxdxyz0}, and thus $c_z(d_{xy}) = d_{xy}$ by~\ref{derived0}\ref{derived1},
so $d_{xy} \neq d_{zz}$ by~\ref{psaxdom0}, i.e. $z \not\in \dom(d_{xy})$.
Taken together, this means $d_{xy} \in \mathbf{V}^{*}[\{x,y\}]$.

\ref{domains2}
If $u \wedge v = 0$, then $u \wedge v \in \mathbf{V}[X \cup Y]$ and we are done. Otherwise, $u \neq 0$ and $v \neq 0$.
Then for all $z \in \mathbf{var}\setminus(X \cup Y)$, we have $u \not\leq d_{zz}$, and thus $c_z(u \wedge v) \leq c_z(u) = u$
by Prop.~\ref{derived0}\ref{derived2} and~\ref{psaxdim0}. Likewise, we obtain $c_z(u \wedge v) \leq v$, so altogether
$c_z(u \wedge v) \leq u \wedge v$, which implies $c_z(u \wedge v) = u \wedge v$ by~\ref{psaxproj0},
and thus $u \wedge v \not\leq d_{zz}$ by~\ref{psaxdim0}. This shows $\dom(u \wedge v) \subseteq X \cup Y$. If $x \in X$,
then $u \wedge v \leq u \leq d_{xx}$, so $X \subseteq \dom(u \wedge v)$, and likewise $Y \subseteq \dom(u \wedge v)$.
Hence $u \wedge v \in \mathbf{V}[X \cup Y]$.

\ref{domains3}
It suffices to show $c_z(u) \in \mathbf{V}^{*}[Y\setminus\{z\}]$. Since $0 \neq u \leq c_z(u)$ by~\ref{psaxproj0}, we obtain $c_z(u) \neq 0$,
and $\dom(c_z(u)) \subseteq \dom(u) = Y$ from~\ref{psaxdom0}. Moreover, $c_z(c_z(u))=c_z(u)$ by Prop.~\ref{derived0}\ref{derived1},
so $c_z(u) \not\leq d_{zz}$ by~\ref{psaxdim0}, i.e. $\dom(c_z(u)) \subseteq Y\setminus\{z\}$. Let $y \in Y\setminus\{z\}$. Since
$u \leq d_{yy}$, we obtain $c_z(u) \leq c_z(d_{yy}) = d_{yy}$ using Prop.~\ref{derived0}\ref{derived2},
$d_{yy} \not\leq d_{zz}$ by~\ref{domains1}, and~\ref{psaxdim0}. This shows $Y\setminus\{z\} \subseteq \dom(c_z(u))$.
So altogether $c_z(u) \in \mathbf{V}^{*}[Y\setminus\{z\}]$.

\ref{domains4}
Since $d_{xy} \in \mathbf{V}^{*}[\{x,y\}]$ by~\ref{domains1}, we obtain
$e_{\varrho} \in \mathbf{V}[\bigcup_{(x,y) \in \varrho} \{x,y\}]$ from~\ref{domains2},
i.e. $e_{\varrho} \in \mathbf{V}[\mathrm{field}(\varrho)]$. It remains to show $e_{\varrho} \neq 0$.
First, consider the relations $\theta_n:=\{(x_k,x_1) \mid 1 \leq k \leq n\}$, defined on the first $n$ variables of $\mathbf{var}$.
We have $e_{\theta_0} = e_{\emptyset} = 1 \neq 0$, and $e_{\theta_1} = d_{x_1x_1} \neq 0$ by~\ref{psaxdxx0}.
Now assume $e_{\theta_n} \neq 0$ for $n \geq 1$. Then $\dom(e_{\theta_n})=\{x_1,\dots,x_n\}$, and thus
$c_{x_{n+1}}(e_{\theta_{n+1}}) = c_{x_{n+1}}(d_{x_{n+1}x_1} \wedge e_{\theta_n}) = c_{x_{n+1}}(d_{x_{n+1}x_1}) \wedge e_{\theta_n}
= d_{x_1x_1} \wedge e_{\theta_n} = e_{\theta_n}$ by Prop.~\ref{derived0}\ref{derived3}, Prop.~\ref{derived0}\ref{derived5} and~\ref{psaxdom0},
in particular $c_{x_{n+1}}(e_{\theta_{n+1}}) \neq 0$, which implies $e_{\theta_{n+1}} \neq 0$ by~\ref{psaxzero0}.
So by induction, $e_{\theta_n} \neq 0$ for all $n \in \mathbb{N}$.
Now consider any $\varrho \subseteq X \times Y$ with $X,Y \in \mathfrak{P}_{\mathrm{fin}}(\mathbf{var})$.
For sufficiently large $n$, we have $X \cup Y \subseteq \{x_1,\dots,x_n\}$,
so for all $(x,y) \in \varrho$, we obtain $e_{\theta_n} \leq d_{xx_1} \wedge d_{yx_1} = d_{xx_1} \wedge d_{x_1y} \leq d_{xy}$
using~\ref{psaxsym0} and Prop.~\ref{derived0}\ref{derived5}, which shows $e_{\theta_n} \leq e_{\varrho}$. Hence $e_{\varrho} \neq 0$.
\qed
\end{proof}
\begin{proof}[Proof of Prop.~\ref{eqcomp0}]
\ref{eqcomp1}
For all $x \in X$, we have $e_{\mu} \wedge e_{\nu} \leq d_{x\mu(x)} \wedge d_{\mu(x)\nu(\mu(x))} \leq d_{x\mu(x)}$ by Prop.~\ref{derived0}\ref{derived6}.
So $e_{\mu} \wedge e_{\nu} \leq e_{\nu \circ \mu}$.
\ref{eqcomp2}
For all $x \in X$, we have $e_{\nu\circ\mu} \wedge e_{\nu} \leq d_{x\nu(\mu(x))} \wedge d_{\mu(x)\nu(\mu(x))} \leq d_{x\mu(x)}$ by~\ref{psaxsym0}
and Prop.~\ref{derived0}\ref{derived6}. So $e_{\nu\circ\mu} \wedge e_{\nu} \leq e_{\mu}$. Together with~\ref{eqcomp1}, we obtain~\ref{eqcomp2}.
\ref{eqcomp3}
Since $\mu$ is a folding, we have $Z \subseteq X$, and $\mu(\mu(x))=\mu(x)$ for all $x \in X$.
So for all $x \in X$, we have $e_{\nu\circ\mu} \leq d_{x\nu(\mu(x))} \wedge d_{\mu(x)\nu(\mu(\mu(x)))} \leq d_{x\mu(x)}$
by~\ref{psaxsym0}, the previous equation, and Prop.~\ref{derived0}\ref{derived6}. This shows $e_{\nu\circ\mu} \leq e_{\mu}$.
Also, $\mu(z)=z$ for all $z \in Z$. So $e_{\nu\circ\mu} \leq d_{z\nu(\mu(z))} = d_{z\nu(z)}$, which shows $e_{\nu\circ\mu} \leq e_{\nu}$.
Taken together, we have $e_{\nu\circ\mu} \leq e_{\mu} \wedge e_{\nu}$. Combined with~\ref{eqcomp1}, this shows~\ref{eqcomp3}.
\qed
\end{proof}
\begin{proof}[Proof of Prop.~\ref{gencydiag0}]
Both statements are shown by induction over the cardinality $\#X$.

\ref{gencydiag1}
For $X=\emptyset$, we have $C_{\emptyset}(u \wedge e_{\emptyset}) = u \wedge 1 = u$. Assume~\ref{gencydiag1} holds for a given $X$.
Let $X':=X \cup \{x\}$ for some $x \not\in X$, and let $\lambda:X' \rightarrow Y$ be domain-disjoint. Then
\begin{align*}
C_{X'}&(u \wedge e_{\lambda}) \underset{\text{Defs.}}{=} c_xC_X(u \wedge e_{\lambda|_X} \wedge d_{x\lambda(x)})
\underset{\text{Prop.\,\ref{gency0}\ref{gency3}}}{=} c_x(C_X(u \wedge e_{\lambda|_X}) \wedge d_{x\lambda(x)}) \\
&\underset{\text{I.H.}}{=} c_x(u \wedge d_{x\lambda(x)})
\underset{\text{Prop.}\,\ref{derived0}\ref{derived3}}{=} u \wedge c_x(d_{x\lambda(x)})
\underset{\text{Prop.}\,\ref{derived0}\ref{derived5}}{=} u \wedge d_{\lambda(x)\lambda(x)}
\underset{\ref{psaxdom0}}{=} u \quad,
\end{align*}
where the conditions $X \cap \{x,\lambda(x)\} = \emptyset$, $x \not\in Y$, $x \neq \lambda(x)$ and $\lambda(x) \in Y$,
which must be checked for the second, fourth, fifth and last equality, respectively, can be verified.

\ref{gencydiag2}
For $X=\emptyset$, we have $C_{\emptyset}(u) \wedge e_{\emptyset} = u \wedge 1 = u$. Assume~\ref{gencydiag2} holds for a given $X$.
Let $X':=X \cup \{x\}$ for some $x \not\in X$, and let $\lambda:X' \rightarrow Y$ be domain-disjoint. Then for all $u \leq e_{\lambda}$, we obtain
\begin{align*}
C_{X'}(u) \wedge e_{\lambda} \underset{\text{Defs.}}{=} C_X(c_x(u)) \wedge d_{x\lambda(x)} \wedge e_{\lambda|_X}
&\underset{\text{Prop.}\,\ref{gency0}\ref{gency3}}{=} C_X(c_x(u) \wedge d_{x\lambda(x)}) \wedge e_{\lambda|_X} \\
&\underset{\text{Prop.}\,\ref{derived0}\ref{derived4}}{=} C_X(u) \wedge e_{\lambda|_X}
\underset{\text{I.H.}}{=} u \quad,
\end{align*}
where the conditions $X \cap \{x,\lambda(x)\} = \emptyset$, $x \neq \lambda(x)$, $u \leq d_{x\lambda(x)}$, $u \leq e_{\lambda|_X}$,
which must be checked for the second, third, third and last equality, respectively, can be verified.
\qed
\end{proof}
\begin{proof}[Proof of Prop.~\ref{ocomp0}]
Let $(\circledast_{XY})_{X,Y \in \mathfrak{P}_{\mathrm{fin}}(\mathbf{var})}$ and
$(\circledcirc_{XY})_{X,Y \in \mathfrak{P}_{\mathrm{fin}}(\mathbf{var})}$ be families of operations
satisfying~\ref{pseaxdis0} and~\ref{pseaxsemi0}. Then
\begin{align*}
u \circledast_{XY} \lambda &\underset{\ref{pseaxsemi0}}{=} (u \circledast_{ZY} \xi_{XY}) \circledast_{XZ} (\xi_{XY}^{-1} \circ \lambda) \\
&\underset{\ref{pseaxdis0}}{=} (u \circledcirc_{ZY} \xi_{XY}) \circledcirc_{XZ} (\xi_{XY}^{-1} \circ \lambda)
\underset{\ref{pseaxsemi0}}{=} u \circledcirc_{XY} \lambda
\end{align*}
for all $u \in \mathbf{V}^{*}[Y]$ and $\lambda:X \rightarrow Y$, which shows uniqueness.

For domain-disjoint $\lambda$, we obtain $C_X(u \wedge e_{\lambda}) = u \neq 0$ using Prop.~\ref{gencydiag0}\ref{gencydiag1},
which implies $u \wedge e_{\lambda} \neq 0$ by Prop.~\ref{gency0}\ref{gency1};
so $u \wedge e_{\lambda} \in V^{*}[X \cup Y]$ by Props.~\ref{domains0}\ref{domains4}
and~\ref{domains0}\ref{domains2}, and thus $C_Y(u \wedge e_{\lambda}) \in V^{*}[X]$ by Prop.~\ref{domains0}\ref{domains3}.
So we obtain $u \odot \lambda \in V^{*}[X]$ from~\eqref{outercomp0}, also for non-domain-disjoint $\lambda$,
which shows $\odot_{XY}:V^{*}[Y] \times Y^X \rightarrow V^{*}[X]$.

Axiom~\ref{pseaxdis0} is satisfied by the definition in~\eqref{outercomp0}. For showing~\ref{pseaxsemi0}, we need to consider different cases.

\textit{Case 1: $\nu$ and $\mu$ domain-disjoint.}
\textit{Subcase 1.1: $\nu \circ \mu$ domain-disjoint.}
By Prop.~\ref{domains0}\ref{domains4} we have $\dom(e_{\nu \circ \mu}) \subseteq X \cup Y$ and $\dom(e_{\mu}) \subseteq X \cup Z$.
Since $X$, $Y$ and $Z$ are pairwise disjoint, this implies $Z \cap \dom(e_{\nu \circ \mu}) = \emptyset$ and $Y \cap \dom(e_{\mu}) = \emptyset$,
which is used in the two applications of Prop.~\ref{gency0}\ref{gency3} below; we obtain
\begin{align*}
u \odot (\nu \circ \mu) &\underset{\eqref{outercomp0}}{=} C_Y(u \wedge e_{\nu\circ\mu})
\underset{\text{Prop.\,\ref{gencydiag0}\ref{gencydiag1}}}{=} C_Y(C_Z(u \wedge e_{\nu}) \wedge e_{\nu\circ\mu}) \\
&\underset{\text{Prop.\,\ref{gency0}\ref{gency3}}}{=} C_Y(C_Z(u \wedge e_{\nu} \wedge e_{\nu\circ\mu}))
\underset{\text{Prop.\,\ref{eqcomp0}\ref{eqcomp2}}}{=} C_Y(C_Z(u \wedge e_{\nu} \wedge e_{\mu})) \\
&\underset{\text{Prop.\,\ref{gency0}\ref{gency2}}}{=} C_Z(C_Y(u \wedge e_{\nu} \wedge e_{\mu}))
\underset{\text{Prop.\,\ref{gency0}\ref{gency3}}}{=} C_Z(C_Y(u \wedge e_{\nu}) \wedge e_{\mu}) \\
&\underset{\eqref{outercomp0}}{=} u \odot \nu \odot \mu \quad.
\end{align*}

\textit{Subcase 1.2: $\nu \circ \mu$ not domain-disjoint.}
Set $\lambda:=\nu \circ \mu$. For each $i \in \{1,2,3\}$, we define domain-disjoint functions $\mu_i:X \rightarrow Z_i$ and $\nu_i:Z_i \rightarrow Y$
as follows: for $i=1$, we set $\nu_i:=\xi_{XY}$ and $\mu_i:=\xi_{XY}^{-1} \circ \lambda$; for $i=3$, we simply set $\nu_i:=\nu$ and $\mu_i:=\mu$;
for $i=2$, let $Z_2 \in \mathfrak{P}_{\mathrm{fin}}(\mathbf{var})$ with $Z_2 \cap (X \cup Y \cup Z_1 \cup Z_3) = \emptyset$ and $\#Z_2=\#Y$;
by the latter, there exists a bijection $\nu_i:Z_2 \rightarrow Y$, and we set $\mu_i:=\nu_i^{-1} \circ \lambda$.
Since $\lambda$ is not domain-disjoint, we have $u \odot \lambda = u \odot \nu_1 \odot \mu_1$ by~\eqref{outercomp0},
and next we show $u \odot \nu_i \odot \mu_i = u \odot \nu_{i+1} \odot \mu_{i+1}$ for $i \in \{1,2\}$.

Note that $\nu_1$ and $\nu_2$ are bijective, and $\nu_i \circ \mu_i = \nu_{i+1} \circ \mu_{i+1}$ for $i \in \{1,2\}$,
i.e. $\mu_i = \nu_i^{-1} \circ \nu_{i+1} \circ \mu_{i+1}$, so we obtain
\begin{align*}
u \odot \nu_i \odot \mu_i = u \odot \nu_i \odot (\nu_i^{-1} \circ \nu_{i+1} \circ \mu_{i+1})
&\underset{\text{Subcase 1.1}}{=} u \odot \nu_i \odot (\nu_i^{-1} \circ \nu_{i+1}) \odot \mu_{i+1} \\
&\underset{\text{Subcase 1.1}}{=} u \odot \nu_{i+1} \odot \mu_{i+1} \quad,
\end{align*}
where Subcase 1.1 applies in the second equation since $Z_i$,$Z_{i+1}$ and $X$ are pairwise disjoint,
and in the last equation since $Z_i$, $Z_{i+1}$ and $Y$ are pairwise disjoint.
Altogether, we have $u \odot (\nu \circ \mu) = u \odot \lambda = u \odot \nu_1 \odot \mu_1 = \dots = u \odot \nu_3 \odot \mu_3 = u \odot \nu \odot \mu$.

\textit{Case 2: general case.}
As before, we decompose $\mu:X \rightarrow Z$ into domain-disjoint $\tau_1:X \rightarrow Z_1$ and $\sigma_1:Z_1 \rightarrow Z$;
and $\nu:Z \rightarrow Y$ into domain-disjoint $\tau_2:Z \rightarrow Z_2$ and $\sigma_2:Z_2 \rightarrow Y$,
where $Z_1$ and $Z_2$ are chosen disjoint from $X,Y,Z$ and each other. Noting that $\tau_2 \circ \sigma_1 \circ \tau_1:X \rightarrow Z_2$
and $\tau_2 \circ \sigma_1:Z_1 \rightarrow Z_2$ are then domain-disjoint, as well, we obtain
\begin{align*}
u \odot (\nu \circ \mu) &= u \odot (\sigma_2 \circ \tau_2 \circ \sigma_1 \circ \tau_1)
\underset{\text{Case 1}}{=} u \odot \sigma_2 \odot (\tau_2 \circ \sigma_1 \circ \tau_1) \\
&\underset{\text{Case 1}}{=} u \odot \sigma_2 \odot (\tau_2 \circ \sigma_1) \odot \tau_1
\underset{\text{Case 1}}{=} u \odot \sigma_2 \odot \tau_2 \odot \sigma_1 \odot \tau_1 \\
&\underset{\text{Case 1}}{=} u \odot (\sigma_2 \circ \tau_2) \odot (\sigma_1 \circ \tau_1)
= u \odot \nu \odot \mu \quad,
\end{align*}
which concludes the proof.
\qed
\end{proof}
\begin{proof}[Proof of Prop.~\ref{proj0}]
\ref{proj1}
By~\ref{psaxsym0} we have $e_{\sigma} = e_{\sigma^{-1}}$, and thus $u \wedge e_{\sigma} \leq e_{\sigma^{-1}}$,
which allows using Prop.~\ref{gencydiag0}\ref{gencydiag2} below; we have
\begin{align*}
u \odot \sigma \odot \sigma^{-1} &\underset{\ref{pseaxdis0}}{=} C_X(C_Y(u \wedge e_{\sigma}) \wedge e_{\sigma^{-1}}) \\
&\underset{\text{Prop.\,\ref{gencydiag0}\ref{gencydiag2}}}{=} C_X(u \wedge e_{\sigma^{-1}})
\underset{\ref{psaxsym0}}{=} C_X(u \wedge e_{\sigma})
\underset{\text{Prop.\,\ref{gencydiag0}\ref{gencydiag1}}}{=} u \quad.
\end{align*}
\ref{proj2}
Let $\iota_X:X \rightarrow Y$ be an inclusion. This implies $X \subseteq Y$. Let $\sigma:Z \rightarrow X$
be a bijection where $Z \cap Y \neq \emptyset$. Then both $\sigma$ and $\iota_X \circ \sigma$ are domain-disjoint.
Moreover, $\dom(e_{\sigma}) = Z \cup X$ by Prop.~\ref{domains0}\ref{domains4},
and thus $(Y \setminus X) \cap \dom(e_{\sigma}) = \emptyset$, which allows using Prop.~\ref{gency0}\ref{gency3} below.
Trivially, $e_{\iota_X \circ \sigma} = e_{\sigma}$, which is used in the third equation below;
and Prop.~\ref{domains0}\ref{domains3} provides $C_{Y\setminus X}(u) \in V^{*}[X]$, which is required for the last equation below.
All things considered, we obtain
\begin{align*}
u \odot \iota_X \odot \sigma &\underset{\ref{pseaxsemi0}}{=} u \odot (\iota_X \circ \sigma)
\underset{\ref{pseaxdis0}}{=} C_Y(u \wedge e_{\iota_X \circ \sigma})
= C_XC_{Y\setminus X}(u \wedge e_{\sigma}) \\
&\underset{\text{Prop.\,\ref{gency0}\ref{gency3}}}{=} C_X(C_{Y\setminus X}(u) \wedge e_{\sigma})
\underset{\ref{pseaxdis0}}{=} C_{Y\setminus X}(u) \odot \sigma \quad.
\end{align*}
As~\ref{proj1} states, $\sigma^{-1}$ cancels $\sigma$, so $u \odot \iota_X = C_{Y\setminus X}(u)$.

\ref{proj3}
Let $\delta:X \rightarrow Y$ be a folding. This implies $Y \subseteq X$. Let $\sigma:Z \rightarrow Y$ be a bijection where $Z \cap X = \emptyset$.
Then both $\sigma^{-1}$ and $\sigma^{-1} \circ \delta$ are domain-disjoint. We abbreviate $v:=u \odot \sigma$, and note that $v \in \mathbf{V}^{*}[Z]$.
We obtain
\begin{align*}
v \odot \sigma^{-1} \odot \delta \underset{\ref{pseaxsemi0}}{=} v \odot (\sigma^{-1} \circ \delta)
&\underset{\ref{pseaxdis0}}{=} C_Z(v \wedge e_{\sigma^{-1} \circ \delta})
\underset{\text{Prop.\,\ref{eqcomp0}\ref{eqcomp3}}}{=} C_Z(v \wedge e_{\sigma^{-1}} \wedge e_{\delta}) \\
&\underset{\text{Prop.\,\ref{gency0}\ref{gency3}}}{=} C_Z(v \wedge e_{\sigma^{-1}}) \wedge e_{\delta}
\underset{\ref{pseaxdis0}}{=} (v \odot \sigma^{-1}) \wedge e_{\delta} \quad,
\end{align*}
where $Z \cap \dom(e_{\delta}) = Z \cap X = \emptyset$ allows using Prop.~\ref{gency0}\ref{gency3}. Resolving $v$,
we obtain $u \odot \delta = u \wedge e_{\delta}$ from~\ref{proj1}.
\qed
\end{proof}

\end{document}